\newcommand{\abs}[1]{{\left|#1\right|}}
\def\XXint#1#2#3{{\setbox0=\hbox{$#1{#2#3}{\int}$}
    \vcenter{\hbox{$#2#3$}}\kern-.5\wd0}}
\theoremstyle{definition}
\newtheorem{definizione}{Definition}[section]
\theoremstyle{plain}
\newtheorem{teorema}{Theorem}[section]
\newtheorem{lemma}[teorema]{Lemma}
\newtheorem{prop}[teorema]{Proposition}
\newtheorem{corollario}[teorema]{Corollary}
\theoremstyle{definition}
\newtheorem{esempio}{Example}[section]
\newtheorem{oss}[esempio]{Remark}
\DeclareMathOperator{\R}{\mathbb{R}}
\DeclareMathOperator{\diam}{\, \textup{diam}}
\newcommand{\myfootnote}[2]{\begingroup
	\def\@makefnmark{}%
	\addtocounter{footnote}{-1}%
	\footnote{\textbf{#1} #2}
	\endgroup}
\definecolor{OliveGreen}{rgb}{0,0.6,0}
 \title{On the optimal sets in P\'olya and Makai type inequalities}
\author{Vincenzo Amato, Nunzia Gavitone, Rossano Sannipoli}
\date{}
\newcommand{\Addresses}{{
  \bigskip 
   \footnotesize 
 \noindent \textit{E-mail address}, V.~ Amato: \texttt{v.amato@ssmeridionale.it} 
  
   \medskip 
 
  \noindent\textsc{Mathematical and Physical Sciences for Advanced Materials and Technologies, Scuola Superiore Meridionale, Largo San Marcellino 10, 80138 Napoli, Italy. }

 \medskip

   \textit{E-mail address}, N.~Gavitone: \texttt{nunzia.gavitone@unina.it} 
   
 \medskip
  \textsc{Dipartimento di Matematica e Applicazioni ``R. Caccioppoli'', Universit\`a degli studi di Napoli Federico II, Via Cintia, Complesso Universitario Monte S. Angelo, 80126 Napoli, Italy.}
  \medskip
 
  \textit{E-mail address}, R.~Sannipoli: \texttt{rossano.sannipoli@unipd.it} 
  
     \medskip 
\textsc{Dipartimento di Matematica “Tullio Levi-Civita”, Universit\'a degli Studi di Padova, Via Trieste 63, 35131 Padua, Italy.}\par\nopagebreak 

}} 
\def\Cline#1#2{\@Cline#1#2\@nil}
\def\@Cline#1-#2#3\@nil{%
  \omit
  \@multicnt#1%
  \advance\@multispan\m@ne
  \ifnum\@multicnt=\@ne\@firstofone{&\omit}\fi
  \@multicnt#2%
  \advance\@multicnt-#1%
  \advance\@multispan\@ne
  \leaders\hrule\@height#3\hfill
  \cr}
\definecolor{verde}{RGB}{20,150,100}
\definecolor{purple}{RGB}{200,30,200}
\begin{document}

\maketitle

\begin{abstract} 
  In this paper, we examine some shape functionals, introduced by P\'olya and Makai, involving the torsional rigidity and the first Dirichlet-Laplacian eigenvalue for bounded, open and convex sets of $\R^n$. We establish quantitative bounds, which give us key properties and information on the behavior of the optimizing sequences.
In particular, we consider two kinds of remainder terms that provide information about the structure of these minimizing sequences, such as information about the thickness.  \\ \\
\textsc{MSC 2020:}   35P15, 49Q10, 35J05, 35J25.\\
\textsc{Keywords:} P\'olya estimates, Makai estimates, quantitative inequalities, web functions.

\end{abstract}

\section{Introduction}
Let $\Omega\subset\mathbb{R}^n$, $n\geq 2$, be a non-empty, bounded, open and convex set. This paper deals with shape functionals involving two well known quantitites, namely the torsional rigidity, denoted by $T(\Omega)$, and the first Dirichlet eigenvalue of the Laplacian $\lambda(\Omega)$. Their variational characterizations are given by
$$
T(\Omega) = \max_{\substack{\varphi \in H^1_0(\Omega) \\ \varphi \not\equiv 0}} \frac{\left(\displaystyle{\int_\Omega \varphi \, dx }\right)^2}{\displaystyle{\int_\Omega \abs{\nabla \varphi}^2 \, dx }} \qquad \text{ and }\qquad\lambda(\Omega)= \min_{\substack{\varphi \in H^1_0(\Omega) \\ \varphi \not\equiv 0}} \frac{\displaystyle{\int_\Omega  \abs{\nabla \varphi}^2 \, dx }}{\displaystyle{\int_\Omega  \varphi^2 \, dx }} .
$$
These functionals are respectively monotonically increasing and decreasing with respect to the set inclusion, and satisfy the following scaling properties for all $t>0$
\begin{equation*}
T(t\Omega)= t^{n+2}T(\Omega),\qquad \lambda(t\Omega)= t^{-2}\lambda(\Omega).
\end{equation*}
Concerning shape optimization issue, there are two acclaimed inequalities for which the ball is the optimum when a measure constraint is imposed. Let $\Omega$ be any open set in $\mathbb R^n$ with finite Lebesgue measure and $B$ any ball. Then, the first one is the \textit{de Saint-Venant inequality}, conjectured in \cite{stvenant} and proved in \cite{PTR48}, stated in the following scaling invariant way
\begin{equation*}
    \abs{\Omega}^{-\frac{n+2}{n}}T(\Omega) \le \abs{B}^{-\frac{n+2}{n}}T(B),
\end{equation*}
where $\abs{\Omega}$ is the Lebesgue measure of $\Omega$. The second one is the \textit{Faber-Krahn inequality}, see  \cite{Faber1923,Krahn1925}, for which we have
\begin{equation*}
    \abs{\Omega}^\frac{2}{n}\lambda(\Omega) \ge \abs{B}^\frac{2}{n}\lambda(B). 
\end{equation*}
Moreover, different inequalities involving  both quantities have been established starting from the second half of the $20$th century (see for instance \cite{KJ1,KJ2,PS}).\\
In this paper, we focus on the following shape functionals:

\begin{equation}
\label{eq:functionals}
    \begin{aligned}
    &\textbf{(i)}\;\;\displaystyle{ \frac{T(\Omega) P^2(\Omega)}{\abs{\Omega}^3}}  & \textit{(P\'olya torsion  functional)},\\ & \textbf{(ii)}\;\;\frac{\lambda( \Omega) \abs{\Omega}^2}{P^2(\Omega)}  & \textit{ (P\'olya eigenvalue  functional)}, \\
    &\textbf{(iii)}\;\;  \frac{T(\Omega)}{R_{\Omega}^2\abs{\Omega}} & \textit{(Makai functional)},  \\
    &\textbf{(iv)}\;\;\lambda( \Omega)R^2_\Omega & \textit{(Hersch functional)},
    \end{aligned}
\end{equation}
where $P(\Omega)$ and $R_\Omega$ denote the perimeter and  inradius of $\Omega$,  respectively (see Section \ref{section_notion} for the precise definitions).  


Starting with the P\'olya torsion  functional, we recall that in \cite{makai, polya1960}, Makai and P\'olya respectively proved, in the planar case, that the functional in $\textbf{(i)}$ is bounded from both above and below in the class of convex sets:
\begin{equation}
\label{polyatorsion}
\frac{1}{3} \leq  \frac{T(\Omega) P^2(\Omega)}{\abs{\Omega}^3} \leq \frac{2}{3},
\end{equation}
and showed that the inequalities are sharp, in the sense that the lower bound is asymptotically achieved by a sequence of thinning rectangles and the upper bound by a sequence of thinning triangles. The lower bound in \eqref{polyatorsion} was generalized to higherdimension in \cite{gavitone_2014}, proving it for any open, bounded, and convex sets in $\mathbb{R}^n$ and showing that it is asymptotically achieved by a sequence of flattening cylinders, i.e. a sequence whose height tends to zero. 
Further generalizations can be found in \cite{BBP2021,AMPS}.\\
With regards to \textbf{(ii)}, the following bounds are known 
\begin{equation}\label{polyaeigenvalue}
    \frac{\pi^2}{4n^2}\le\frac{\lambda( \Omega) \abs{\Omega}^2}{P^2(\Omega)}\le \frac{\pi^2}{4}.
\end{equation}
The upper bound was first proved in \cite{polya1960} in the class of convex planar sets,
being sharp for a sequence of thinning rectangles. Successively, it was generalized to higher dimensions and in the case of the first eigenvalue of the anisotropic $p$-Laplace operator by \cite{gavitone_2014}. The lower bound was proved in \cite{makai} and in \cite{brasco_inradius} in two and n dimensions, respectively, and it is sharp on sequences of collapsing pyramids (see \cite{DBN} for a generalization in the anisotropic setting).\\
Concerning \textbf{(iii)}, the following bounds are known 
\begin{equation}
\label{hershtype}
\frac{1}{n(n+2)} \le \frac{T(\Omega)}{R_{\Omega}^2\abs{\Omega}} \leq \frac{1}{3}.
\end{equation}
Makai proved (see \cite{makai}) the upper bound in the two dimensional setting, 
also proving the sharpness for sequences of thinning rectangles. The lower bound was first proved in \cite{PS}, where the equality holds if and only if $\Omega$ is a disc.
Later on, in \cite{dellapietra_gavitone2018} the authors generalized both inequalities in higher dimension and also for more general operators. Among their results, they prove that the upper bound in \eqref{hershtype} is achieved by a suitable sequence of flattening cylinders.\\
Finally, for the functional in \textbf{(iv)} we have the following bounds
\begin{equation}\label{eq:Hersch-Protter}
    \frac{\pi^2}{4}\le\lambda( \Omega)R^2_\Omega\le \lambda_1(B_1).
\end{equation}
The upper bound is an immediate consequence of monotonicity with respect to the inclusion. The lower bound is known as \textit{Hersch-Protter inequality}, since it has been originally proved by Hersch \cite{hersch1960frequence} in the two dimensional case and, later on, generalized in higher dimension by Protter \cite{protter1981lower}.

Further generalizations have been developed for more general spectral functionals, such as the principal frequencies of the p-Laplacian and related shape functionals. Contributions in this direction include \cite{brasco2018principal, brasco_2020_principal_frequencies}.

Besides the lower bound in \eqref{hershtype} and the upper bound in \eqref{eq:Hersch-Protter}, the common thread of all these functionals is that their bounds are achieved by sequences of particular sets, without admitting an optimal set, i.e. if $J(\cdot)$ is any of the functionals in \eqref{eq:functionals} and $\mathcal{K}_n$ is
$$\mathcal{K}_n= \left\{\text{non-empty, bounded, open, and convex subset of }\R^n\right\},$$ then
$$
\nexists \,\tilde \Omega\in\mathcal{K}_n \text{ such that } \, \inf_{\Omega \in \mathcal{K}_n} \mathcal{J}(\Omega) = \mathcal{J}(\tilde \Omega).
$$ 
The aim of this paper is to prove quantitative results for the shape functionals in \eqref{eq:functionals}.\\

The quantitative results for the functionals defined in \eqref{eq:functionals} differ substantially  from the classical ones known in literature, due to the existence of an optimum. Indeed, for such cases, the quantitative analysis becomes more complex, as there is no optimal set to compare the minimising sequence with. 

Quantitative stability problems of this type have been investigated in the literature.
For spectral functionals involving the gap between the first two eigenvalues, both in the Dirichlet and Neumann settings, we refer to the works \cite{ABF,ABF2}, where quantitative inequalities are established. 
Quantitative results for shape functionals related to the Cheeger constant have been obtained in \cite{FT2021,FMP}, where stability estimates are derived.

A different but closely related approach is provided by quantitative versions of classical isoperimetric-type inequalities involving spectral quantities and torsional rigidity. In particular, in \cite{VFNT2016} the authors study a Pólya-type functional combining the first Dirichlet eigenvalue, the torsional rigidity, and the measure of the domain, obtaining quantitative lower bounds and stability results.

It will soon be clear that the remainder terms that we will add to the qualitative inequalities will not fully characterize the shape of the minimizing sequence, as for instance for the isoperimetric inequality, but they will allow to give some information and properties of such sequences.

 Let us now define the  
following two remainder terms 
\begin{equation}\label{eq:remterms}
    \alpha(\Omega):=\frac{w_\Omega}{\diam(\Omega)}, \qquad \qquad \text{and} \qquad \qquad  \beta(\Omega):=\frac{P(\Omega)R_\Omega}{\abs{\Omega}}-1,
\end{equation}
where we denote by $w_\Omega$ and $\diam(\Omega)$, the minimal width and the diameter of $\Omega$, respectively (see Section \ref{section_notion} for the exact definitions). 
We stress that the first remainder term allows to define the class of the so-called \textit{thinning domains} (see Section \ref{sec:2} for the precise definition), that are sequences of sets for which $\alpha(\Omega)\to 0$. Moreover the remainder term $\beta(\Omega)$ is always between $0$ and $n-1$, where the lower bound is sharp and which is achieved by a sequence of flattening cylinders, meanwhile the upper bound is sharp, for instance, on balls (see Proposition \ref{prop:PRM-1}). \\
First, we emphasize a result connecting the two remainder terms $\alpha(\Omega)$ and $\beta(\Omega)$.
\begin{prop}\label{prop:comparisonasymmetries}
     Let $\Omega \in \mathcal{K}_n$. There exists a positive dimensional constant $C_o(n)$ such that
     \begin{equation}\label{eq:PRM-WD}
         \beta(\Omega)\ge C_0(n)\alpha(\Omega).
     \end{equation}
The constant $C_0(n)$ can be explicitly computed. Moreover, the exponent of $\alpha(\Omega)$ is sharp and a reverse inequality cannot be true, since there are sequences of thinning domains for which the functional $\beta(\Omega)$ is not converging to zero (for instance a sequence of collapsing pyramids).
\end{prop}

This proposition provides information about the nature of the minima of the functional $\beta(\Omega)$: every minimizing sequence for $\beta$ must consist of thinning domains.
The latter shows that the two remainder terms are not equivalent, in the sense that $\beta(\Omega)$ cannot be bounded from above in terms of $\alpha(\Omega)$. This is the reason why we distinguish the quantitative results for the functionals in \eqref{eq:functionals} with respect to $\alpha(\Omega)$ and $\beta(\Omega)$.

 Finally, we highlight that, when referring to the sharp exponent, we mean that there exists a sequence for which the functional on the left-hand side divided by the right-hand side converges to a positive constant.\\
 The purpose of this paper is, in a sense, to provide a characterization of the minimizing sequences for the lower bounds in \eqref{polyatorsion} and \eqref{eq:Hersch-Protter} and for the upper bounds in \eqref{polyaeigenvalue} and \eqref{hershtype}, by means of quantitative and continuity results expressed in terms of $\alpha(\Omega)$ or $\beta(\Omega)$. More precisely, we aim to establish inequalities that ensure two key properties: first, if the functional is close to its optimum, then the corresponding shape is close to a slab; and second, if the set is close to a slab, then the functional is close to its infimum.

An initial step in this direction appears in \cite{AMPS}, where two quantitative results are established for the functional in \textbf{(i)}.
The first one involves $\alpha(\Omega)$: in any dimension and it holds
\begin{equation}\label{eq:AMPS1}
    \frac{T(\Omega) P^2(\Omega)}{\abs{\Omega}^3}-\frac{1}{3}\ge K_1(n)\alpha(\Omega)^{n-1},
\end{equation}
where $K_1(n)$ is a positive dimensional constant and $\Omega\in \mathcal{K}_n$. Inequality \eqref{eq:AMPS1} states that when the functional is close to its optimal value, the domain $\Omega$ must be a thinning domain.

 To have more information on the shape of the minimizing sequence, the authors proved in dimension $2$ that there exists a  positive constant $K_2$ such that
\begin{equation}\label{eq:AMPS3}
\frac{T(\Omega) P^2(\Omega)}{\abs{\Omega}^3}-\frac{1}{3}\ \ge K_2 \beta(\Omega)^3.    
\end{equation}

The first result is the $n$-dimensional generalization and improvement to the inequality \eqref{eq:AMPS3} regarding the P\'olya torsion functional.

\begin{teorema} \label{thm:quantitative_PRM}
Let $\Omega\in \mathcal{K}_n$. Then, 
 \begin{equation}\label{eq:TPM-PRM}
         \frac{n+1}{3}\beta(\Omega)\ge 
         \frac{T(\Omega) P^2(\Omega)}{\abs{\Omega}^3}-\frac{1}{3} \ge \frac{1}{2^3\cdot 3^4 n^3} \beta(\Omega)^3.
    \end{equation}
\end{teorema}
\vspace{0.5cm}
\noindent The second one is about the P\'olya eigenvalue functional.

\begin{teorema}\label{thm:quantitative_lambdapr}
Let $\Omega\in \mathcal{K}_n$. Then
\begin{equation}\label{quantitative_HMP-PRM}
\frac{\pi^2}{2}\beta(\Omega)\ge  \frac{\pi^2}{4} -\frac{\lambda( \Omega) \abs{\Omega}^2}{P^2(\Omega)}  \ge \frac{\pi^2}{2^5\cdot 3^4}\cdot\frac{1}{n^3(2n-1)}\beta(\Omega)^4.
\end{equation}

\end{teorema}
\vspace{0.5cm}
\noindent The third main results regards the Makai functional.
\begin{teorema} \label{thm:quantitative_torsionmakai_PRM}
Let $\Omega\in \mathcal{K}_n$. Then
\begin{equation}\label{eq:quantitative_PRM_torsion_makai}
\frac{2}{3}\beta(\Omega)\ge\frac{1}{3}-\frac{T(\Omega)}{R_{\Omega}^2\abs{\Omega}}>\frac{n+1}{3n(2n-1)}\beta(\Omega).
\end{equation}
\end{teorema}
\begin{oss}\label{rem:betasharp} Some comments on the results are in order:
\begin{itemize}
    \item The main idea behind the proofs of Theorems~\ref{thm:quantitative_PRM} and~\ref{thm:quantitative_lambdapr} is to use a suitable test function, following the approach originally introduced by P\'olya, and then to carefully handle the terms that were discarded in the classical argument. It turns out that the measure and the perimeter of a specific superlevel set of the distance function, namely $\Omega_{P(\Omega)/|\Omega|}$, play a crucial role in this analysis. This is the reason why Lemma~\ref{lemma:M(PM)eP(MP)} is needed.
 In the proof of Theorem~\ref{thm:quantitative_torsionmakai_PRM}, we rely on estimates of the torsion in terms of the 
$L^2$-norm of the distance function to the boundary (see Proposition \ref{prop:estimatestorsion}), together with concavity properties of the perimeter of the superlevel sets of the distance function.
    \item Theorems \ref{thm:quantitative_PRM}, \ref{thm:quantitative_lambdapr} and \ref{thm:quantitative_torsionmakai_PRM}  fully characterize the optimizing sequences of these three functionals in terms of the functional $\beta(\Omega)$, which is purely geometric. Specifically, these results allow us to assert that a sequence is minimizing for the \emph{P\'olya torsion functional}, \emph{P\'olya eigenvalue functional}, or \emph{Makai functional} if and only if it minimizes $\beta(\Omega)$.

    \item The optimality of the exponents of $\beta(\Omega)$ in Theorem~\ref{thm:quantitative_torsionmakai_PRM}  follows directly from the Theorem itself, since the two exponents are equal. 
Indeed, for any set for which $\beta$ tends to $0$, the Makai functional also goes to $\frac{1}{3}$ with the same order.
\end{itemize}
\end{oss}

\noindent As an immediate consequence of Proposition \ref{prop:comparisonasymmetries} and Theorems \ref{thm:quantitative_PRM}, \ref{thm:quantitative_lambdapr} and \ref{thm:quantitative_torsionmakai_PRM}, we can also bound from below the three quantities in \eqref{eq:TPM-PRM}, \eqref{quantitative_HMP-PRM} and \eqref{eq:quantitative_PRM_torsion_makai} in terms of $\alpha(\Omega)$, with the same exponents. Therefore, regarding the P\'olya torsion functional, we have an improvement of the inequality \eqref{eq:AMPS1} for $n\ge 5$. The authors of \cite{AMPS} conjectured that the optimal exponent in \eqref{eq:AMPS1} is $1$. We state and prove this conjecture, and prove an analogous result for the P\'olya eigenvalue functional. This is the content of the following proposition.
\begin{prop} \label{prop:quantitative_wd}
Let $\Omega\in \mathcal{K}_n$. Then, 
\begin{equation}\label{quantitative_width}
 \frac{T(\Omega) P^2(\Omega)}{\abs{\Omega}^3}-\frac{1}{3}  \ge C_1(n) \alpha(\Omega),
\end{equation}
and 
\begin{equation}\label{quantitative_width2}
 \frac{\pi^2}{4} - \frac{\lambda( \Omega) \abs{\Omega}^2}{P^2(\Omega)}
 \ge C_2(n) \alpha(\Omega),
\end{equation}
where $C_1(n)$ and $C_2(n)$ are positive constants depending only on the dimension $n$, and they can be explicitly computed.  
In particular, the exponents of $\alpha(\Omega)$ are sharp. Moreover, a reverse inequality cannot hold in either case, since there exist sequences of thinning domains for which the left-hand side does not tend to zero (for instance, sequences of collapsing pyramids).

\end{prop}  

For the sake of completeness, we recall some known-in-literature result and remarks on the Hersch functional \textbf{(iv)} defined in \eqref{eq:functionals}. It is known that for any  $\Omega\in \mathcal{K}_n$,
\begin{equation}\label{eq:protter_WD}
    K_3(\diam(\Omega)^2\Lambda)\alpha(\Omega)^\frac{2}{3}\ge \lambda( \Omega)R^2_\Omega-\frac{\pi^2}{4}\ge K_4\alpha(\Omega)^2,
\end{equation}
for some positive real constants $K_3(\diam (\Omega)^2\Lambda)$ and $K_4$, where $\Lambda$ denotes the first eigenvalue of the $(m-1)$-dimensional Dirichlet Laplacian of the projection of $\Omega$ onto the hyperplane orthogonal to the direction of the width (see \cite{VFNT2016} for further details).
 The lower bound was proved in \cite{protter1981lower}, while the upper bound is actually hidden in the proof of \cite[Theorem $1.5$]{VFNT2016}. In dimension $2$, the chain of inequalities \eqref{eq:protter_WD} completely characterizes the minimizers of the Hersch functional, which are precisely the thinning domains (and vice versa), since in this case it is possible to prove the boundedness of $\Lambda$.
 An upper bound has also been proved in \cite{FT2021}, albeit with a different approach.


Inequalities \eqref{eq:protter_WD} show that the remainder term $\beta(\Omega)$ cannot be added as a lower bound; indeed, otherwise one would be able to control $\beta(\Omega)$ from above in terms of $\alpha$, which is not possible by Proposition \ref{prop:comparisonasymmetries}.
Nevertheless, Proposition \ref{prop:comparisonasymmetries} permits its addition above, even though the constant depends on the first Dirichlet eigenvalue of a $(n-1)$-dimensional convex set. To get rid of this problem, we pay a price in terms of exponent, but we can prove that
\begin{corollario}
\label{cor:quantitative_lr}
  Let $\Omega\in \mathcal{K}_n$. Then, 
\begin{equation}\label{quantitative_LR}
 \frac{\pi^2(n+1)}{4}\beta(\Omega)\ge\lambda( \Omega)R^2_\Omega-\frac{\pi^2}{4}.
\end{equation}
\end{corollario}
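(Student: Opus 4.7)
The plan is to exploit the algebraic identity linking $\lambda(\Omega)R_\Omega^2$ to the P\'olya eigenvalue functional through the remainder term $\beta(\Omega)$, and then invoke the known upper bound from \eqref{polyaeigenvalue} together with the bound $\beta(\Omega) \leq n-1$ recalled in Proposition \ref{prop:PRM-1}.

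First I would rewrite the target quantity by inserting $P^2(\Omega)/|\Omega|^2$ and its reciprocal:
\begin{equation*}
    \lambda(\Omega) R_\Omega^2 \,=\, \frac{\lambda(\Omega)\abs{\Omega}^2}{P^2(\Omega)} \cdot \frac{P^2(\Omega) R_\Omega^2}{\abs{\Omega}^2} \,=\, \frac{\lambda(\Omega)\abs{\Omega}^2}{P^2(\Omega)}\bigl(1+\beta(\Omega)\bigr)^{2},
\end{equation*}
where the last equality is just the definition of $\beta(\Omega)$ in \eqref{eq:remterms}, squared. This is the essential algebraic bridge between the Hersch functional and the P\'olya eigenvalue functional.

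Next I would apply the upper bound in \eqref{polyaeigenvalue}, i.e.\ $\lambda(\Omega)\abs{\Omega}^2/P^2(\Omega) \leq \pi^2/4$, obtaining
\begin{equation*}
    \lambda(\Omega) R_\Omega^2 - \frac{\pi^2}{4} \,\leq\, \frac{\pi^2}{4}\bigl[(1+\beta(\Omega))^2 - 1\bigr] \,=\, \frac{\pi^2}{4}\beta(\Omega)\bigl(2+\beta(\Omega)\bigr).
\end{equation*}
Finally, using $\beta(\Omega) \leq n-1$ from Proposition \ref{prop:PRM-1}, the factor $2+\beta(\Omega)$ is bounded by $n+1$, which yields \eqref{quantitative_LR}.

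There is really no obstacle here; the corollary is a direct combinatorial consequence of the identity above and of two facts already established in the paper (the P\'olya upper bound \eqref{polyaeigenvalue} and the sharp range of $\beta$). The small price one pays in the exponent, compared with \eqref{eq:protter_WD}, comes exactly from the quadratic factor $(1+\beta(\Omega))^2$ appearing in the bridging identity, which forces the appearance of the dimensional factor $n+1$ through the trivial bound on $2+\beta(\Omega)$.
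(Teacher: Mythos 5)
Your proof is correct and is essentially the same as the paper's: both multiply and divide by $P^2(\Omega)/|\Omega|^2$, apply the upper bound $\lambda(\Omega)|\Omega|^2/P^2(\Omega)\le\pi^2/4$, factor the resulting difference of squares as $\beta(\Omega)(2+\beta(\Omega))$, and bound the second factor by $n+1$ via $\beta(\Omega)\le n-1$. The only difference is cosmetic — you write the bridging identity in terms of $(1+\beta(\Omega))^2$ whereas the paper keeps $P(\Omega)R_\Omega/|\Omega|$ explicit.
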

The proof is just a manipulation of the functional and the use of the lower bound for the P\'olya eigenvalue functional.\\

\noindent For the sake of readability, we summarize the new and known results in the following table.

\begin{table}[!ht]
    \caption{Summary of the results.}\label{tab:tab0}

\begin{tabular}{|c||c|c||c|c|}

    \hline
    \multicolumn{1}{|c||}{} & \multicolumn{2}{|c||}{Lower remainder term} & \multicolumn{2}{|c|}{Upper remainder term} \\
    \hline\hline
    & & & &\\[-1.8ex]
    & $\displaystyle\bigg(\frac{w_\Omega}{d_\Omega}\bigg)^\gamma$ &  $\displaystyle\bigg(\frac{P(\Omega)R_\Omega}{\abs{\Omega}}-1\bigg)^\delta$  &  $\displaystyle\bigg(\frac{w_\Omega}{d_\Omega}\bigg)^\gamma$ & {$\displaystyle\bigg(\frac{P(\Omega)R_\Omega}{\abs{\Omega}}-1\bigg)^\delta$}
    \tabularnewline[3ex]
    \hline\hline
    & & & & \\[-1.8ex]
    $\displaystyle \frac{T(\Omega)P^2(\Omega)}{\abs{\Omega}^3}-\frac{1}{3}$  &\makecell{\textcolor{OliveGreen}{\checkmark}\\ $\gamma=1$  \\ sharp} & \makecell{\textcolor{OliveGreen}{\checkmark}\\ $\delta=3$}  & \makecell{\textcolor{red}{\ding{55}}\\ (collapsing pyramids)}& \makecell{\textcolor{OliveGreen}{\checkmark} \\ $\delta=1$ \\ sharp} \tabularnewline[3ex]
    \hline 
    & & & & \\[-1.8ex]
    $\displaystyle\frac{\pi^2}{4}-\frac{\lambda(\Omega)\abs{\Omega}^2}{P^2(\Omega)}$ &  \makecell{\textcolor{OliveGreen}{\checkmark}\\ $\gamma=1$ \\ sharp} & \makecell{\textcolor{OliveGreen}{$\checkmark$} \\ $\delta=4$}&  \makecell{\textcolor{red}{\ding{55}}\\ (collapsing pyramids)} &  \makecell{\textcolor{OliveGreen}{\checkmark} \\ $\delta=1$ \\ sharp} 
    \tabularnewline[3ex]
    \hline
    & & & & \\[-1.8ex]
    $\displaystyle \frac{1}{3}- \frac{T(\Omega)}{R_\Omega^2 \abs{\Omega}}$ &\makecell{\textcolor{OliveGreen}{\checkmark}\\ $\gamma=1$\\
    sharp} & \makecell{{\textcolor{OliveGreen}{\checkmark}}\\ $\delta=1$\\
    sharp} & \makecell{\textcolor{red}{\ding{55}}\\ (collapsing pyramids)} &  \makecell{\textcolor{OliveGreen}{\checkmark} \\ $\delta=1$ \\ sharp 
    } 
    \tabularnewline[3ex]
    \hline
    & & & & \\[-1.8ex]
    $\displaystyle \lambda(\Omega)R_\Omega^2-\frac{\pi^2}{4}$ &\makecell{\textcolor{black!65}{\faBook}\\ $\gamma=2$} & \makecell{\textcolor{red}{\ding{55}}\\ (collapsing pyramids)} & \makecell{\textcolor{black!65}{\faBook} \\ $\gamma=2/3$}&  \makecell{\textcolor{OliveGreen}{\checkmark} \\ $\delta=1$
    } 
    \tabularnewline[3ex]
        \hline
\end{tabular}
\begin{quotation}
    \textbf{Symbol Legend:}

    \begin{itemize}
  \item[\textcolor{OliveGreen}{\checkmark}]: Proved in this paper in any dimension;
  \item[\textcolor{red}{\ding{55}}]: Not possible (we indicate the counterexample);
    \item[\textcolor{black!65}{\faBook}]: Known in literature.

    \end{itemize}
  \end{quotation}
\end{table}
\vspace{0.7cm}

\noindent \textbf{Plan of the paper:} In Section $2$ we recall some basic notions and definitions, and we recall some classical results, focusing in particular on the class of convex sets. In Section $3$ we prove Theorems \ref{thm:quantitative_PRM}, \ref{thm:quantitative_lambdapr} and \ref{thm:quantitative_torsionmakai_PRM} about the estimates involving $\beta(\Omega)$, while in  Section $4$ we give the proof of Proposition \ref{prop:quantitative_wd}, regarding the one involving $\alpha(\Omega)$. Eventually, Section $5$ is dedicated to the proof of the sharpness of the inequalities proved.

\section{Preliminary results}\label{sec:2}
\label{section_notion}
\subsection{Notations and basic facts} 
 Throughout this article, $|\cdot|$ will denote the Euclidean norm in $\mathbb{R}^n$,
 while $(\,\cdot\,)$ is the standard Euclidean scalar product for  $n\geq2$. For $k\in [0,n)$, the $k-$dimensional Hausdorff measure is denoted by $\mathcal{H}^k(\cdot)$. We will denote by $B_r(x)$ the ball centered at the point $x\in \mathbb R^n$ with radius $r>0$; moreover, when the ball is centered at the origin we will write $B_r$, omitting the dependence on $x$, and when the radius of the ball is $1$, we will denote by $\mathbb{S}^{n-1}$ its boundary.
 
Since $\Omega \in \mathcal{K}_n$, its perimeter can be defined by
\[
P(\Omega) = \mathcal{H}^{n-1}(\partial \Omega).
\]
If $P(\Omega) < \infty$, we say that $\Omega$ is a set of finite perimeter. Some references for results relative to the sets of finite perimeter and for the coarea formula are, for instance, \cite{ambrosio2000functions,maggi2012sets}.

 We give now the definition of the support function of a convex set and minimal width (or thickness) of a convex set.
\begin{definizione}\label{support}
  Let $\Omega\in \mathcal{K}_n$. The support function of $\Omega$ is defined as
  \begin{equation*}
    h_\Omega(y)=\sup_{x\in \Omega}\left(x\cdot y\right), \qquad y\in \mathbb{R}^n .
  \end{equation*}
\end{definizione}

\begin{definizione}
Let $\Omega\in \mathcal{K}_n$, the width of $\Omega$ in the direction $y \in \mathbb{S}^{n-1}$ is defined as 
  \begin{equation*}
    \omega_{\Omega}(y)=h_{\Omega}(y)+h_{\Omega}(-y)
    \end{equation*}
 and the minimal width of $\Omega$ as
\begin{equation*}
    w_\Omega=\min\{  \omega_{\Omega}(y)\,|\; y\in\mathbb{S}^{n-1}\}.
\end{equation*}
\end{definizione}
We will denote by $R_\Omega$ is the inradius of $\Omega$, i.e.
 \begin{equation}
     \label{inradiuss}
     R_\Omega=\sup\{r\in \mathbb R: B_r(x)\subset \Omega, x\in \Omega\},
 \end{equation}
and by $\diam(\Omega)$ the diameter of $\Omega$, that is
\begin{equation*}
    \diam(\Omega) = \sup_{x,y\in\Omega}|x-y|.
\end{equation*}
\begin{definizione}
Let $\Omega\in \mathcal{K}_n$. We say that $\Omega_l$ is a sequence of thinning domains if
    \begin{equation*}
        \dfrac{w_{\Omega_l}}{\diam(\Omega_l)}\xrightarrow{l \to 0}0.
    \end{equation*}
See \cite{AMPS} for more details and some figures.
\end{definizione}

We recall in the following the relation between the inradius and the minimal width (see as a reference \cite{ scott_family, santalo_sobre,fenchel_bonnesen}).
\begin{prop}
\label{prop:r>w}
Let $\Omega\in \mathcal{K}_n$. Then,
the following estimates (that can be found in \cite[equation (9)]{fenchel_bonnesen}):
\begin{equation}
\label{eq:lowboundinradius}
    \displaystyle \frac{w_\Omega}{2}\ge R_{\Omega}\ge\begin{cases}
    w_{\Omega} \displaystyle{\frac{\sqrt{n+2}}{2n+2}} & n \,\, \text{even}\\ \\
    w_{\Omega} \displaystyle{\frac{1}{2\sqrt{n}}} & n \,\, \text{odd},
    \end{cases}
\end{equation}
\end{prop}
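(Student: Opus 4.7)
The plan is to treat the two bounds separately. The upper bound is elementary, while the lower bound is Steinhagen's classical inequality, for which I sketch the standard strategy.

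\textbf{Upper bound.} I would fix any inscribed ball $B_{R_\Omega}(x_0)\subset\Omega$ and project onto a one-dimensional subspace. For every $y\in\mathbb{S}^{n-1}$, the orthogonal projection of $B_{R_\Omega}(x_0)$ on the line spanned by $y$ is an interval of length $2R_\Omega$, contained in the projection of $\Omega$ whose length equals $\omega_\Omega(y)=h_\Omega(y)+h_\Omega(-y)$. Hence $2R_\Omega\le\omega_\Omega(y)$ for every $y$, and taking the infimum over $\mathbb{S}^{n-1}$ yields $2R_\Omega\le w_\Omega$.

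\textbf{Lower bound.} This is Steinhagen's inequality. I would proceed in three steps. First, translate so that $B:=B_{R_\Omega}(0)\subset\Omega$. By maximality of $R_\Omega$, the contact set $B\cap\partial\Omega$ cannot lie in an open half-space through the origin, otherwise a small translation of $B$ in the opposite direction would allow one to enlarge the inscribed ball, contradicting the definition of $R_\Omega$. Consequently, the outward unit normals to $\Omega$ at the contact points positively span $\mathbb{R}^n$, and by a Carath\'eodory-type selection one extracts $m\le n+1$ of them, say $\nu_1,\dots,\nu_m$, with $0\in\operatorname{conv}\{\nu_1,\dots,\nu_m\}$. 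Second, since each corresponding supporting hyperplane is tangent to $B$ and bounds $\Omega$, one has $\Omega\subset P:=\bigcap_{i=1}^{m}\{x\in\mathbb{R}^n : x\cdot\nu_i\le R_\Omega\}$, and therefore $w_\Omega\le w_P$; it suffices to estimate $w_P$ for the simplex-like region $P$. Third, one minimizes $w_P$ subject to $B\subset P$ and $m\le n+1$: the extremal configuration of normals $\{\nu_i\}\subset\mathbb{S}^{n-1}$ is a regular simplex (for odd $n$, realized by the regular $n$-simplex circumscribed about $B$), while for even $n$ it is a slightly degenerate configuration with two normals antipodal and the remaining ones forming a regular $(n-1)$-simplex in the orthogonal hyperplane. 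A direct trigonometric computation on each extremal polytope yields the constants $1/(2\sqrt{n})$ for odd $n$ and $\sqrt{n+2}/(2n+2)$ for even $n$.

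The main obstacle is the third step: the variational problem of minimizing the width of $P$ over admissible normal configurations requires a careful case analysis, and it is precisely here that the parity dichotomy appears. Since the paper uses Proposition \ref{prop:r>w} only as a geometric input and explicitly refers to \cite{fenchel_bonnesen, scott_convex_2000, scott_family, santalo_sobre}, I would not reproduce the finer extremal-configuration argument but rather quote the explicit computations in those classical sources, limiting the write-up to the short projection proof of the upper bound and the conceptual outline above for the lower bound.
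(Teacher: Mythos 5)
The paper does not actually prove Proposition~\ref{prop:r>w}: it is stated as a known fact and the reader is referred to \cite{fenchel_bonnesen} and the other listed sources, exactly as you end up doing for the lower bound. So you go strictly further than the paper by supplying a proof of the upper bound and an outline of the lower bound. Your projection argument for $2R_\Omega\le w_\Omega$ is correct and is the standard one-line proof, and your identification of the lower bound as Steinhagen's inequality together with the first two reduction steps (contact normals positively span; pass to the circumscribed polytope $P$ with at most $n+1$ faces tangent to the inball, so that $w_\Omega\le w_P$ and $R_\Omega=R_P$) is accurate.

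The one thing I would flag is your description of the extremal configuration in Step 3 for even $n$. You write that the minimizer is ``a slightly degenerate configuration with two normals antipodal and the remaining ones forming a regular $(n-1)$-simplex.'' That configuration produces a slab of thickness $2R_\Omega$ in the antipodal direction, which only yields the trivial bound $w_P\le 2R_P$. In fact the regular $n$-simplex is the extremal body for \emph{both} parities; the parity dichotomy in the constant comes not from a different extremal polytope but from the direction in which the regular simplex attains its minimal width, namely the segment joining the centroids of a $k$-subset and its complementary $(n+1-k)$-subset of the vertices, with $k=\lceil(n+1)/2\rceil$. A direct computation with the standard simplex $\operatorname{conv}\{e_1,\dots,e_{n+1}\}\subset\mathbb{R}^{n+1}$ gives $r=1/\sqrt{n(n+1)}$ and, for the optimal $k$,
\begin{equation*}
w=\sqrt{\frac{n+1}{k(n+1-k)}}=
\begin{cases}
\dfrac{2}{\sqrt{n+1}} & n\ \text{odd},\\[2ex]
\dfrac{2\sqrt{n+1}}{\sqrt{n(n+2)}} & n\ \text{even},
\end{cases}
\end{equation*}
which yields $w/r=2\sqrt{n}$ for odd $n$ and $w/r=2(n+1)/\sqrt{n+2}$ for even $n$, matching the constants in \eqref{eq:lowboundinradius}. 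Since you defer to the classical sources for the extremal analysis this slip does not invalidate your plan, but the description of the even case should be corrected before any of it is written out.
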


Moreover, we have the following estimate involving the perimeter and the diameter, which follows from \cite[equation (7)]{fenchel_bonnesen},
\begin{equation}\label{eq:perdiam}
\displaystyle{P(\Omega)\le n\omega_n \left(\frac{n}{2n+2}\right)^{\frac{n-1}{2}} \diam(\Omega)^{n-1}}.
 \end{equation}

 \subsection{Properties of the distance function and inner parallel sets} \label{innere}
Let $\Omega\in \mathcal{K}_n$. We define the distance function from the boundary, and we will denote it by $ d(\cdot, \partial \Omega):\Omega \to [0,+\infty[$, as follows 
 \begin{equation*}
     d(x,\partial\Omega):=\inf_{y\in\partial\Omega}\abs{x-y},
 \end{equation*}
 and we call inradius, $R_\Omega$, its maximum.
 
We remark that the distance function is concave, as a consequence of the convexity of $\Omega$.
The superlevel sets of the distance function
\begin{equation*}
    \Omega_t=\set{x\in \Omega \, : \, d(x,\partial\Omega)>t}, \qquad t\in[0, R_\Omega]
\end{equation*}
 are called \emph{inner parallel sets}, and we use the following notations:
 \begin{equation*}
 \mu(t)= \abs{\Omega_t}, \qquad P(t)=P(\Omega_t)\qquad t\in[0, R_\Omega].
 \end{equation*}
 By coarea formula, recalling that $\abs{\nabla d}=1$ almost everywhere, we have 
 $$\mu(t)=\int_{\Set{d>t}} \,  dx = \int_{\Set{d>t}} \frac{\abs{\nabla d}}{\abs{\nabla d}} \,  dx= \int_{t}^{R_\Omega} \frac{1}{\abs{\nabla d}} \int_{\Set{d=s}} d\mathcal{H}^{n-1} \, ds= \int_{t}^{R_\Omega} P(s)\; ds;$$
hence, the function $\mu(t)$ is absolutely continuous, decreasing and its derivative is  
\begin{equation}\label{eq:dermu}
    \mu'(t)=-P(t)\qquad a.e.
\end{equation}
By the Brunn-Minkowski inequality (\cite[Theorem 7.4.5]{schneider}) and the concavity of the distance function, the map
\begin{equation*}
    t \mapsto P(t)^{\frac{1}{n-1}}
\end{equation*}
is concave in $[0,R_{\Omega}]$, hence absolutely continuous in $(0,R_{\Omega})$. Moreover, there exists its right derivative at $0$ and it is negative, since $P(t)^{\frac{1}{n-1}}$ is strictly monotone decreasing, hence almost everywhere differentiable. As a consequence of the monotonicity of $P(t)^{\frac{1}{n-1}}$, also $P(t)$ is strictly monotone decreasing, and then, by \eqref{eq:dermu}, $\mu(t)$ is convex. Moreover the concavity allows us to say that $(P^\frac{1}{n-1}(t))''\le 0$. Furthermore, integrating \eqref{eq:dermu} from $0$ to $\abs{\Omega}$ and considering the fact that in convex sets $P(t)\le P(\Omega)$, we get
\begin{equation}
\label{steiner-measure}
    \mu(t) \ge \abs{\Omega}-P(\Omega)t \qquad a.e.
\end{equation}
The well-known Steiner formula for outer parallel sets reads
\[
|\Omega+\rho B_1| =\sum_{i=0}^{n} \binom{n}{i} W_i(\Omega) \rho^i.
\]
The coefficient $W_i(\Omega)$, $i=0,\ldots,n$ is known as the $i$-th
quermassintegral of $\Omega$. It is well know that $W_0(\Omega)=|\Omega|$,
$nW_1(\Omega)=P(\Omega)$, $W_n(\Omega)=\omega_n$, where $\omega_n$ is the measure of the unit ball in dimension $n$. If $\Omega$ is of class $C^2$, with
nonvanishing Gaussian curvature, the quermassintegrals can be connected
to the principal curvatures of the boundary of $\Omega$.

Crucial to mention are the Aleksandrov-Fenchel inequalities 
\begin{equation*}
\left( \frac{W_j(\Omega)}{\omega_n} \right)^{\frac{1}{n-j}} \ge \left(
  \frac{W_i(\Omega)}{\omega_n} \right)^{\frac{1}{n-i}}, \quad 0\le i < j
\le n-1,
\end{equation*}
with equality sign if and only if  $\Omega$ is
a ball. As a consequence of the Alexandrov-Fenchel inequality, we have the following lemma, proved in \cite{BNT2010}.
\begin{lemma}
\label{lema22}
Let $\Omega\in \mathcal{K}_n$. Then for a.e. $s\in (0,R_\Omega)$ we have
    \begin{equation*}
        -\frac{d}{ds}P(s) \ge (n-1) W_2(s),
    \end{equation*}
and the equality holds if and only if $\Omega$ is a ball.
Hence, for all $s\in (0,R_\Omega)$ we have  \begin{equation}\label{eq:PerW2}
       P(s) \le P(\Omega)-(n-1)\int_0^s W_2(s)\,ds.
    \end{equation}
\end{lemma}
 In particular, for $\Omega$ non-empty bounded, open and convex set of $\mathbb{R}^2$, \eqref{eq:PerW2} reads
  \begin{equation}\label{steiner1}
     P(t)\leq P(\Omega)-2\pi t \qquad \forall t \in [0,R_\Omega].
 \end{equation}
 equality holding in \eqref{steiner1} for the stadii (see \cite{fragala_gazzola_lamboley}).

The following lemma is a key ingredient for the quantitative estimates. The first estimate will be crucial for proving the quantitative bounds involving $\alpha(\Omega)$ for the P\'olya torsion and eigenvalue functionals, while the second one will be used to establish the estimates involving $\beta(\Omega)$.
\begin{lemma}
Let $\Omega\in \mathcal{K}_n$. Then, for all $t\in (0,R_\Omega)$, we have

\begin{equation}
    \label{perimeter_estimate}
    P(t)\leq P(\Omega) - c_n \frac{\abs{\Omega} - \mu(t)}{P^{\frac{1}{n-1}}(\Omega)}, \text{ for } c_n=(n-1)\omega_n^{\frac{1}{n-1}},
\end{equation}
and 

\begin{equation}
    \label{measure_estimate1}
    \mu(t)\leq (R_\Omega-t)P(t)+ \frac{(R_\Omega-t)^2}{2(n-1)}P'(t).
\end{equation}
\end{lemma}
\begin{proof}We begin with the proof of \eqref{perimeter_estimate}.  
In the planar case, this inequality is a consequence of \eqref{steiner1} and \eqref{steiner-measure}, with $c_2 = 2\pi$.

If $n\geq 3$, by Lemma \ref{lema22} and the Alexandrov-Frenchel inequalities, we have
\begin{align*}
          P(t) &\le P(\Omega)-(n-1)\int_0^tW_2(s)\,ds\\
         &\leq P(\Omega)- c_n \int_0^t P^{\frac{n-2}{n-1}}(s)\, ds\\
        &\leq P(\Omega)- c_n P^{-\frac{1}{n-1}}(\Omega)\int_0^t -\mu'(s)\, ds\\
         &=P(\Omega) - c_n \frac{\abs{\Omega} - \mu(t)}{P^{\frac{1}{n-1}}(\Omega)},
    \end{align*}
    where $c_n=(n-1)\omega_n^{\frac{1}{n-1}}$.

We now prove \eqref{measure_estimate1}.  
Since the measure of the inner parallel sets is absolutely continuous and the perimeter is monotone, we have

    \begin{equation*}
        \mu(t) = \int_{t}^{R_\Omega}P(s)\,ds = \int_{t}^{R_\Omega}P(s)^{\frac{1}{n-1}}P(s)^{\frac{n-2}{n-1}}\,ds\le P(t)^{\frac{n-2}{n-1}}\int_{t}^{R_\Omega}P(s)^{\frac{1}{n-1}}\,ds.
    \end{equation*}
    Now, integrating by parts twice, we get
    \begin{equation*}
    \begin{split}
    \int_{t}^{R_\Omega}P(s)^{\frac{1}{n-1}}\,ds &= (R_\Omega-t)P(t)^{\frac{1}{n-1}}+\int_t^{R_\Omega}(R_\Omega-s)(P(s)^{\frac{1}{n-1}})'\,ds \\
    &=(R_\Omega-t)P(t)^{\frac{1}{n-1}}+\frac{(R_\Omega-t)^2}{2}(P(t)^{\frac{1}{n-1}})'+\frac{1}{2}\int_t^{R_\Omega}(R_\Omega-s)^2(P(s)^{\frac{1}{n-1}})''\,ds.
    \end{split}
\end{equation*}
Since the map $t\to P(t)^{\frac{1}{n-1}}$ is concave, $(P(s)^{\frac{1}{n-1}})''\le 0$ for a.e. $s$, and we get
\begin{equation*}
  \int_{t}^{R_\Omega}P(s)^{\frac{1}{n-1}}\,ds\le (R_\Omega-t)P(t)^{\frac{1}{n-1}}+\frac{(R_\Omega-t)^2}{2(n-1)}P(t)^{-\frac{n-2}{n-1}}P'(t)  . 
\end{equation*}
    Therefore
    \begin{equation*}
     \mu(t)\leq (R_\Omega-t)P(t)+ \frac{(R_\Omega-t)^2}{2(n-1)}P'(t).   
    \end{equation*}
\end{proof}

\subsection{Upper and lower bounds for the Torsion}\label{subsec:webtors}

Two key ingredients for the proofs of the lower bounds for the P\'olya and Makai functionals are specific estimates below and above of the torsion $T(\Omega)$, that we here explain. 

Regarding the lower bound,
the idea has been known for several decades and can be found in \cite{polya1960}. Here we rewrite the computations made by P\'olya to give a better comprehension of the subject and to make it more readable for non-expert readers. 

\begin{prop}\label{prop:estimatestorsion}
    Let $\Omega\in \mathcal{K}_n$ and let us consider the torsion $T(\Omega)$ of the set $\Omega$. Then
    
\begin{equation*}
\label{eq:lowboundpolya}
\int_0^{R_\Omega}\frac{\mu^2(t)}{P(t)}\,dt\leq T(\Omega) \leq \int_{\Omega}d(x,\partial\Omega)^2\,dx.
\end{equation*}
\end{prop}

\begin{proof}
    Let us prove only the lower bound since we will follow the same arguments proposed by P\'olya, i.e. the use of a suitable test function of the distance from the boundary.
    For the proof of the upper bound we refer to Makai (see \cite{makai}) for the planar case and to \cite[Theorem 1.1]{prinari2023sharp}, for the general case.
    
    Let us consider as a test function for the torsion $f(x)= g(d(x,\partial\Omega))$,where $g$ is suitably chosen. At this point, coarea formula and an integration by parts allow to write
$$
\int_\Omega f(x)\, dx = \int_0^{R_\Omega} g(t) P(t) \, dt=\int_0^{R_\Omega} g'(t) \mu(t) \, dt,$$
and
\begin{equation*}
    \int_\Omega \abs{ \nabla f}^2\,dx = \int_0^{R_\Omega} g'^2(t) P(t) \, dt.
\end{equation*}
In this way, we get

\begin{equation*}
    \label{torsionpolya}
    T(\Omega) \geq \frac{\displaystyle{\left(\int_\Omega f(x)\, dx\right)^2}}{\displaystyle{\int_\Omega \abs{ \nabla f}^2\, dx}} =\frac{\displaystyle{\left(\int_0^{R_\Omega} g'(t) \mu(t) \, dt\right)^2}}{\displaystyle{\int_0^{R_\Omega} g'^2(t) P(t) \, dt}},
\end{equation*}
and choosing $g'(t) = \mu(t)/P(t)$, we finally have 
\begin{equation*}
\int_0^{R_\Omega}\frac{\mu^2(t)}{P(t)}\,dt \leq T(\Omega) .
\end{equation*}
\end{proof}

The integral on the left-hand side of \eqref{eq:lowboundpolya} is the exact representation of the so-called web torsion (see \cite{PS}). See also \cite{CFG2002} for the upper estimate in \eqref{eq:lowboundpolya} proved in dimension $2$, and successively generalized in higher dimensions in \cite{Cra2004}.

\subsection{Comparison results between the remainder terms}
As we already mentioned, our aim is to prove quantitative estimates for the functionals in \eqref{eq:functionals}, using as remainder terms the two remainder terms defined in \eqref{eq:remterms}, that we write here again
\begin{equation*}
\alpha(\Omega):=\frac{w_\Omega}{\diam(\Omega)}, \qquad \qquad \text{and} \qquad \qquad  \beta(\Omega):=\frac{P(\Omega)R_\Omega}{\abs{\Omega}}-1.
\end{equation*}
We recall the following estimate, which is proved in \cite{fenchel_bonnesen} in the planar case and is generalized in \cite{DBN,Kova} to all dimensions. 

\begin{prop}\label{prop:PRM-1}
Let $\Omega\in \mathcal{K}_n$. Then, 
 \begin{equation}\label{convex_estimates}
1  < \dfrac{P(\Omega) R_\Omega}{|\Omega|}  \le n. 
 \end{equation}
The lower bound is sharp on a sequence of flattening cylinders, while the upper bound is achieved if $\Omega_{(1-t)R_{\Omega}}= t \Omega $ for $t \in (0,1)$. 
\end{prop}

\noindent Our first result is a quantitative version of the lower bound in \eqref{convex_estimates} in terms of $\alpha(\Omega)$.

\begin{proof}[Proof of Proposition \ref{prop:comparisonasymmetries}]
If we integrate the estimate \eqref{perimeter_estimate} between $0$ and the inradius $R_\Omega$, and use the fact that $-\mu'(t)=P(t)$, we obtain
\begin{equation*}
\begin{split}
    \abs{\Omega}= \int_0^{R_\Omega}P(t)\,dt&\le P(\Omega)R_\Omega-c_n \int_0^{R_\Omega}\frac{\abs{\Omega} - \mu(t)}{P^{\frac{1}{n-1}}(\Omega)}\,dt\\
    &=P(\Omega)R_\Omega-c_n\frac{\abs{\Omega}}{P^{\frac{1}{n-1}}(\Omega)}\int_0^{R_\Omega}\bigg(1-\frac{\mu(t)}{\abs{\Omega}}\bigg)\,dt\\
    &\le P(\Omega)R_\Omega-c_n\frac{\abs{\Omega}^2}{P^{\frac{1}{n-1}}(\Omega)P(\Omega)}\int_0^{R_\Omega}\bigg(1-\frac{\mu(t)}{\abs{\Omega}}\bigg)\frac{-\mu'(t)}{\abs{\Omega}}\,dt\\
    &= P(\Omega)R_\Omega-\frac{c_n}{2}\frac{\abs{\Omega}^2}{P^{\frac{1}{n-1}}(\Omega)P(\Omega)}.
    \end{split}
\end{equation*}
Now, dividing by $\abs{\Omega}$ and using estimate \eqref{convex_estimates} we have
\begin{equation*}
    \begin{split}
        \frac{P(\Omega)R_\Omega}{\abs{\Omega}}-1 \ge \frac{c_n}{2}\frac{\abs{\Omega}}{P^{\frac{1}{n-1}}(\Omega)P(\Omega)}\ge \frac{c_n}{2n}\frac{R_\Omega}{P^{\frac{1}{n-1}}(\Omega)}.
    \end{split}
\end{equation*}
Eventually, considering \eqref{eq:lowboundinradius} and \eqref{eq:perdiam}, we obtain
\begin{equation*}
    \beta(\Omega)=\frac{P(\Omega)R_\Omega}{\abs{\Omega}}-1 \ge C_0(n)\frac{w_\Omega}{\diam(\Omega)}= C_0(n)\alpha(\Omega),
\end{equation*}
where
\begin{equation*}
    \displaystyle C_0(n)=\begin{cases}
     \displaystyle{\frac{n-1}{2^{\frac52}n^{\frac{n}{n-1}}}\bigg[\frac{n(n+2)}{(n+1)^3}\bigg]^\frac{1}{2}} & n \,\, \text{even}\\ \\
    \displaystyle{\frac{n-1}{2^{\frac52}n^{\frac{n}{n-1}}}\bigg[\frac{1}{n+1}\bigg]^\frac{1}{2}} & n \,\, \text{odd}.
    \end{cases}
\end{equation*}
For the optimality of the exponent  see Proposition \ref{prop:sharpnessexponent}, while for the counterexample see Remark \ref{rem:counterexamples}.
\end{proof}

\section[Proofs of the results in terms of beta]{Proofs of the results in terms of $\beta$}
For the proof of Theorems \ref{thm:quantitative_PRM} and \ref{thm:quantitative_lambdapr}, we need first to prove a technical Lemma. We consider a particular inner parallel set at the level $\abs{\Omega}/P(\Omega)$. This choice is justified by Proposition \ref{prop:PRM-1}, being this quantity  always less than $R_\Omega$ and tending to $R_\Omega$ when $\beta(\Omega)\to 0$. In order to prove the main results, we need to relate the measure and the perimeter of this inner parallel set  with the remainder term $\beta(\Omega)$. 
\begin{lemma}\label{lemma:M(PM)eP(MP)}
Let $\Omega\in \mathcal{K}_n$ and let $\beta(\Omega)$ be defined in \eqref{eq:remterms}. Then,
    \begin{equation}
        \label{eq:lemmaM(mp)}\mu\left(\frac{\abs{\Omega}}{P(\Omega)}\right)\ge q_1(n,\Omega) \abs{\Omega},
    \end{equation}
\begin{equation}\label{eq:LemmaP(MP)}
        P\left(\frac{\abs{\Omega}}{P(\Omega)}\right)\le q_{2}(n,\Omega)P(\Omega),
    \end{equation}
    where
    \begin{equation}
        q_1(n,\Omega)= \frac{\beta(\Omega)}{6n}
        \quad \text{ and }\quad q_{2}(n,\Omega)= \left(1+\frac{\beta(\Omega)}{n}\right)^{-1}
    \end{equation}
  
\end{lemma}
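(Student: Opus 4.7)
The plan is to work with $t^\ast := |\Omega|/P(\Omega)$. Since $|\Omega| = \int_0^{R_\Omega} P(s)\,ds \le R_\Omega P(\Omega)$ by monotonicity of $P(\cdot)$, one has $t^\ast \le R_\Omega$, and moreover $R_\Omega - t^\ast = t^\ast \beta(\Omega)$. Both estimates will follow by combining the monotonicity of $P(\cdot)$ with the Brunn--Minkowski concavity of $\mu^{1/n}$ on $[0,R_\Omega]$.

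For \eqref{eq:LemmaP(MP)}, first I would use the monotonicity of $P$ to write
\[
|\Omega|-\mu(t^\ast) \;=\; \int_0^{t^\ast} P(s)\,ds \;\ge\; t^\ast\, P(t^\ast).
\]
Next, since $\mu^{1/n}$ is concave on $[0,R_\Omega]$ and vanishes at $R_\Omega$, the slope inequality for concave functions yields $(\mu^{1/n})'(t^\ast) \ge -\mu(t^\ast)^{1/n}/(R_\Omega-t^\ast)$, which after rearrangement reads
\[
\mu(t^\ast) \;\ge\; \frac{P(t^\ast)(R_\Omega-t^\ast)}{n} \;=\; \frac{P(t^\ast)\,t^\ast\,\beta(\Omega)}{n}.
\]
Summing the two inequalities gives $|\Omega| \ge P(t^\ast)\,t^\ast\,(n+\beta(\Omega))/n$; dividing by $t^\ast = |\Omega|/P(\Omega)$ produces $P(t^\ast) \le P(\Omega)/(1+\beta(\Omega)/n)$, which is \eqref{eq:LemmaP(MP)}.

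For \eqref{eq:lemmaM(mp)} the target is a lower bound on $\mu(t^\ast)$ that is \emph{linear} in $\beta(\Omega)$, so the two concavity arguments above (which give only $\mu(t^\ast)\gtrsim\beta(\Omega)^n|\Omega|$) are not enough on their own. My plan is to exploit the identity
\[
\mu(t^\ast) \;=\; \int_0^{t^\ast}\bigl(P(\Omega)-P(s)\bigr)\,ds,
\]
valid because $|\Omega|=P(\Omega)\,t^\ast$, and to pair \eqref{eq:LemmaP(MP)} with the concavity of $s\mapsto P(s)^{1/(n-1)}$ (again from Brunn--Minkowski). The bound $P(t^\ast)\le nP(\Omega)/(n+\beta(\Omega))$ forces the drop $P(\Omega)-P(s)$ to exceed a definite multiple of $P(\Omega)\beta(\Omega)/n$ on a substantial portion of $[0,t^\ast]$, and integrating yields the required linear-in-$\beta(\Omega)$ estimate.

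The main technical obstacle will be tracking the numerical constant $1/(6n)$: two extremal regimes must be balanced carefully. In the thinning-cylinder regime $P(R_\Omega)\approx P(\Omega)$ is large, and the convexity of $\mu$ makes the bound $\mu(t^\ast)\ge P(R_\Omega)(R_\Omega-t^\ast)$ the decisive one; in the ball-like regime $P(R_\Omega)=0$ but $P(t^\ast)$ remains comparable to $P(\Omega)$, and the Brunn--Minkowski estimate $\mu(t^\ast)\ge P(t^\ast)(R_\Omega-t^\ast)/n$ paired with \eqref{eq:LemmaP(MP)} becomes the decisive one. A careful interpolation of these two bounds across the range $\beta(\Omega)\in(0,n-1]$ should produce the stated constant.
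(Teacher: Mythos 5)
Your argument for \eqref{eq:LemmaP(MP)} is correct and is essentially the paper's: you combine $\abs{\Omega}-\mu(t^\ast)\ge t^\ast P(t^\ast)$ (monotonicity of $P$) with $\mu(t^\ast)\ge P(t^\ast)(R_\Omega-t^\ast)/n$ and divide by $t^\ast$. The only cosmetic difference is that you rederive the second inequality from concavity of $\mu^{1/n}$ via the slope-at-the-endpoint trick, whereas the paper simply applies the bound $P(\Omega_t)(R_{\Omega_t})\le n\abs{\Omega_t}$ (Proposition \ref{prop:PRM-1} for the inner parallel set $\Omega_t$); these are the same estimate.

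Your plan for \eqref{eq:lemmaM(mp)}, however, has a genuine gap, and the tools you name do not produce the claimed conclusion. You write $\mu(t^\ast)=\int_0^{t^\ast}\bigl(P(\Omega)-P(s)\bigr)\,ds$ and propose to show that $P(\Omega)-P(s)$ exceeds a multiple of $P(\Omega)\beta(\Omega)/n$ on a ``substantial portion'' of $[0,t^\ast]$, invoking concavity of $s\mapsto P(s)^{1/(n-1)}$. But the Brunn--Minkowski concavity puts $P(s)^{1/(n-1)}$ \emph{above} its chord on $[0,t^\ast]$, i.e.\ it yields a \emph{lower} bound on $P(s)$ and therefore an \emph{upper} bound on the integrand $P(\Omega)-P(s)$ — the wrong direction entirely. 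Since $P(\Omega)-P(s)$ vanishes at $s=0$ and is merely known to be increasing, nothing you have cited prevents it from staying close to $0$ on most of $[0,t^\ast]$ and climbing only near $t^\ast$; the same concavity that gives $P(t^\ast)\le nP(\Omega)/(n+\beta)$ in fact controls that climb so that the resulting lower bound on the integral is of order $\beta^n$, not $\beta$. This is precisely the obstruction that forces the paper to argue differently: it proceeds by contradiction, assuming $\mu(t^\ast)<\frac{\beta(\Omega)}{6n}\abs{\Omega}$, introduces the auxiliary level $\overline{t}=\sup\{s<t^\ast:P(s)>P(\Omega)/2\}$, uses $\mu(t)\ge\abs{\Omega}-P(\Omega)t$ plus $P\le P(\Omega)/2$ past $\overline t$ to force $\overline t>t^\ast(1-2q_1)$, then uses the \emph{convexity} of $\mu$ (not the $1/n$-power concavity) to bound $\mu(\overline t)<3q_1\abs{\Omega}$ from above, and finally applies $P(\overline t)(R_\Omega-\overline t)\le n\mu(\overline t)$ at the level $\overline t$, where the lower bound $P(\overline t)\ge P(\Omega)/2$ is available, to reach $\beta(\Omega)<\beta(\Omega)$. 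The decisive idea you are missing is this choice of $\overline t$: the estimate $P(\overline t)\ge P(\Omega)/2$ supplies exactly the lower perimeter bound at an intermediate level that your direct integral argument cannot produce, and it is what makes the bound linear rather than $n$-th power in $\beta(\Omega)$.
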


\begin{proof}
    We start from inequality \eqref{eq:lemmaM(mp)}. Let us assume by contradiction that 
    \begin{equation}\label{absurd}
        \mu\left(\frac{\abs{\Omega}}{P(\Omega)}\right)<\frac{\abs{\Omega}}{6n}\left(\frac{P(\Omega)R_\Omega}{|\Omega|}-1\right)=q_1(n,\Omega)\abs{\Omega}.
    \end{equation}
Let us define
    \begin{equation*}
        \overline{t}=\sup\left\{s\in \left(0,\frac{\abs{\Omega}}{P(\Omega)}\right) : P(s)>\frac{P(\Omega)}{2}\right\}.
    \end{equation*}
By the absolute continuity of $\mu(\cdot)$ and by \eqref{eq:dermu}, we know that 

\begin{equation*}
\mu\left(\frac{\abs{\Omega}}{P(\Omega)}\right)=\mu(\overline{t})+ \int_{\overline{t}}^{\frac{\abs{\Omega}}{P(\Omega)}}\mu'(s) \, ds
    =\mu(\overline{t})- \int_{\overline{t}}^{\frac{\abs{\Omega}}{P(\Omega)}}P(s) \, ds.
\end{equation*}
Applying inequality \eqref{steiner-measure} and using the fact that $P(s)\le P(\Omega)/2$ in the interval $(\Bar{t}, \frac{\abs{\Omega}}{P(\Omega)})$, we get
\begin{equation*}
    \mu\left(\frac{\abs{\Omega}}{P(\Omega)}\right)\ge \abs{\Omega}-P(\Omega) \overline{t}- \frac{P(\Omega)}{2}\left(\frac{\abs{\Omega}}{P(\Omega)}-\overline{t}\right)=\frac{\abs{\Omega}}{2}-\frac{P(\Omega)}{2}\overline{t}.
\end{equation*}
Previous inequality, combined with \eqref{absurd}, gives

$$\frac{\abs{\Omega}}{6n}\left(\frac{P(\Omega)R_\Omega}{|\Omega|}-1\right)> \frac{\abs{\Omega}}{2}-\frac{P(\Omega)}{2}\overline{t},$$
and rearranging the terms we have the following bounds on $\Bar{t}$
$$\frac{\abs{\Omega}}{P(\Omega)}\ge \overline{t}> \frac{\abs{\Omega}}{P(\Omega)}\left(1-\frac{2}{6n}\left(\frac{P(\Omega)R_\Omega}{\abs{\Omega}}-1\right)\right).$$
Now, since $\mu(0)=\abs{\Omega}$ and \eqref{absurd} holds, then the convexity  of $\mu(\cdot)$ ensures that $\mu(t)$ is under the line passing through the points $(0,\abs{\Omega})$ and $(\frac{\abs{\Omega}}{P(\Omega)},q_1(n,\Omega)\abs{\Omega})$, for all $t \in [0,\frac{\abs{\Omega}}{P(\Omega)}]$, that is 
\begin{equation*}
    \mu(t) \le \abs{\Omega}+(q_1(n,\Omega)-1)P(\Omega)t, \qquad t\in \bigg[0,\frac{\abs{\Omega}}{P(\Omega)}\bigg].
\end{equation*}In particular, since $\Bar{t}\in [0,\frac{\abs{\Omega}}{P(\Omega)}]$ and using the fact that $q_1\equiv q_1(n,\Omega)\le 1$, we arrive to
$$\mu(\overline{t})\le \abs{\Omega}+ (q_1-1)P(\Omega)\overline{t}<\abs{\Omega}+ (q_1-1)|\Omega|(1-2q_1)\le 3q_1 \abs{\Omega}. $$
On the other hand, since $P(\overline{t}) \geq \frac{P(\Omega)}{2}$, we have
\begin{equation*}
    R_{\Omega} - \frac{\abs{\Omega}}{P(\Omega)} \leq  R_{\Omega} - \overline{t} \leq n \frac{\mu(\overline{t})}{P(\overline{t})} < n \frac{6q_1\abs{\Omega}}{P(\Omega)},
\end{equation*}
that can be rewritten as follows
$$
 \frac{P(\Omega) R_\Omega}{\abs{\Omega}}-1 < 6n \frac{1}{6n} \left(\frac{P(\Omega) R_\Omega}{\abs{\Omega}}-1\right),
$$
that is a contraddiction.\\
Let us now prove inequality \eqref{eq:LemmaP(MP)}, using the monotonicity of the perimeter we know that
\begin{equation}\label{eq:estimatemeasure}
    \mu(t)=\abs{\Omega}-\int_{0}^{t}P(s)\,ds\le \abs{\Omega}-P(t)t. 
\end{equation}
Moreover, using the upper bound in Proposition \ref{prop:PRM-1} for the superlevel set $\Omega_t$, we have that
\begin{equation}\label{eq:PRM_t}
    \frac{P(t)(R_\Omega-t)}{n}\le \mu(t).
\end{equation}
So that using \eqref{eq:estimatemeasure} and \eqref{eq:PRM_t}, evaluated at $t=\abs{\Omega}/P(\Omega)$, we get
\begin{equation*}
    \frac{1}{n}P\left(\frac{\abs{\Omega}}{P(\Omega)}\right)\left(R_\Omega-\frac{\abs{\Omega}}{P(\Omega)}\right)\le \abs{\Omega}-P\left(\frac{\abs{\Omega}}{P(\Omega)}\right)\frac{\abs{\Omega}}{P(\Omega)},
\end{equation*}
that gives
\begin{equation*}
   P\left(\frac{\abs{\Omega}}{P(\Omega)}\right)\left[\frac{1}{n}\left(\frac{P(\Omega)R_\Omega}{\abs{\Omega}}-1\right)+1\right] \le P(\Omega),
\end{equation*}
and eventually \eqref{eq:LemmaP(MP)}.
\end{proof}

We are now in position to prove Theorem \ref{thm:quantitative_PRM}.

\begin{proof}[Proof of Theorem \ref{thm:quantitative_PRM}]Let start from the lower bound.
 Using Proposition \ref{prop:estimatestorsion}, we have
  \begin{align*}
     T(\Omega)&\geq \int_{0}^{R_\Omega}\dfrac{\mu^2(t)}{P(t)}dt\\
      &\geq \dfrac{1}{P^2(\Omega)}\int_0^{\frac{|\Omega|}{P(\Omega)}}\left(|\Omega|-P(\Omega)t\right)^2 P(\Omega) dt
      +\dfrac{1}{P^2(\Omega)}\int_{\frac{|\Omega|}{P(\Omega)}}^{R_\Omega} \mu^2(t)(-\mu'(t))dt \\
      & =\dfrac{|\Omega|^3}{3 P^2(\Omega)}+\dfrac{1}{P^2(\Omega)}\cdot\dfrac{\mu^3\left(\dfrac{|\Omega|}{P(\Omega)}\right)}{3}.
  \end{align*}

 So that, applying Lemma \ref{lemma:M(PM)eP(MP)}, we obtain 
\begin{align*}
      \dfrac{T(\Omega)P^2(\Omega)}{|\Omega|^3}-\dfrac{1}{3}\geq
      \dfrac{1}{3}\cdot\dfrac{\mu^3\left(\dfrac{|\Omega|}{P(\Omega)}\right)}{|\Omega|^3}\geq \dfrac{q_1(n,\Omega)^3}{3}=\dfrac{1}{2^3\cdot3^4  n^3}\left(\frac{P(\Omega) R_\Omega}{\abs{\Omega}}-1\right)^3,
  \end{align*}
   and this proves the lower bound in \eqref{eq:TPM-PRM}.

\noindent For what it concerns the upper bound, we use the Makai inequality
  $$
  T(\Omega) \leq \frac{1}{3} R_\Omega^2 \abs{\Omega}.$$
Then
  $$
  \dfrac{T(\Omega)P^2(\Omega)}{|\Omega|^3}-\frac{1}{3} \leq \frac{1}{3}\left(\frac{P^2(\Omega) R^2_\Omega}{\abs{\Omega}^2}-1\right)=\frac{1}{3}\left(\frac{P(\Omega) R_\Omega}{\abs{\Omega}}+1\right)\left(\frac{P(\Omega) R_\Omega}{\abs{\Omega}}-1\right)\leq\frac{n+1}{3}\left(\frac{P(\Omega) R_\Omega}{\abs{\Omega}}-1\right).$$
  For the optimality of the exponent of the upper bound see Proposition \ref{prop:sharpnessexponent}.
  
\end{proof}
For the P\'olya eigenvalue functional, we prove Theorem \ref{thm:quantitative_lambdapr}.
\begin{proof}[Proof of Theorem \ref{thm:quantitative_lambdapr}]
 The first lines of the proof follow the same argument proposed in \cite{polya1960}, whose computations are analogous to the one shown in Subsection \ref{subsec:webtors}. Let us use as a test function in the variational characterization of $\lambda(\Omega)$ the function $f(x)=g(t)$, where $g$ depends only on the distance function from the boundary of $\Omega$. Then by coarea formula we get

\begin{equation}
\label{polyalambda}
    \lambda(\Omega) \leq \dfrac{\displaystyle{\int_0^{R_\Omega} (g'(t))^2 P(t) \, dt}}{\displaystyle{\int_0^{R_\Omega} g^2(t) P(t) \, dt}}.
\end{equation}
The latest, with the change of  variables $s= \frac{\pi}{2}\frac{\mu(t)}{\abs{\Omega}}$, leads to 

\begin{equation*}
    \lambda(\Omega)\le\frac{\pi^2}{4\abs{\Omega}^2}\frac{\displaystyle{\int_{0}^{\frac{\pi}{2}}h'(s)^2 P(t)^2\,ds}}{\displaystyle{\int_0^{\frac{\pi}{2}}h(s)^2\,ds}},
\end{equation*}
where $h(s)=g(t)$, with $h(\frac{\pi}{2})=0$. Now, we choose $\Bar{t}= \frac{\abs{\Omega}}{P(\Omega)}$ and we  denote by 
\begin{equation}\label{eq:bars}
    \Bar{s}= \frac{\pi}{2}\frac{\mu(\Bar{t})}{\abs{\Omega}}.
    \end{equation}
Hence, we divide the integral at the numerator in \eqref{polyalambda} at $\Bar{s}$, obtaining

\begin{equation*}
\begin{split}
    \lambda(\Omega) &\le \frac{\pi^2}{4\abs{\Omega}^2}\cdot\frac{\displaystyle \int_{0}^{\Bar{s}}h'(s)^2P(t)^2\,ds +\int_{\Bar{s}}^{\frac{\pi}{2}}h'(s)^2P(t)^2\,ds}{\displaystyle\int_0^{\frac{\pi}{2}}h(s)^2\,ds}\\
    &\le\frac{\pi^2}{4\abs{\Omega}^2}\cdot\frac{\displaystyle P(\Omega)\int_{0}^{\Bar{s}}h'(s)^2P(t)\,ds +P(\Omega)^2\int_{\Bar{s}}^{\frac{\pi}{2}}h'(s)^2\,ds}{\displaystyle\int_0^{\frac{\pi}{2}}h(s)^2\,ds}.
    \end{split}
\end{equation*}
Using the monotonicity of the perimeter in the first integral, we have that $P(t)\le P(\abs{\Omega}/P(\Omega))$ and applying Lemma \ref{lemma:M(PM)eP(MP)}, we have
\begin{equation}\label{eq:estimateintermediate}
\begin{split}
    \lambda(\Omega) &\le\frac{\pi^2}{4\abs{\Omega}^2}\cdot\frac{\displaystyle q_2(n,\Omega)P(\Omega)^2\int_{0}^{\Bar{s}}h'(s)^2\,ds +P(\Omega)^2\int_{\Bar{s}}^{\frac{\pi}{2}}h'(s)^2\,ds}{\displaystyle\int_0^{\frac{\pi}{2}}h(s)^2\,ds}\\
    &=\frac{\pi^2P(\Omega)^2}{4\abs{\Omega}^2}\cdot\frac{\displaystyle (q_2(n,\Omega)-1)\int_{0}^{\Bar{s}}h'(s)^2\,ds +\int_{0}^{\frac{\pi}{2}}h'(s)^2\,ds}{\displaystyle\int_0^{\frac{\pi}{2}}h(s)^2\,ds}.
\end{split}
\end{equation}
Now we choose $h(s)=\cos(s)$, so that
\begin{equation*}
    \int_0^\frac{\pi}{2}h'(s)^2\,ds=\int_0^\frac{\pi}{2}h(s)^2\,ds= \frac{\pi}{4}.
\end{equation*}
In this way, multiplying \eqref{eq:estimateintermediate} by $\abs{\Omega}^2/P(\Omega)^2$, we get
\begin{equation*}
    \frac{\pi^2}{4}-\frac{\lambda(\Omega)\abs{\Omega}^2}{P(\Omega)^2}\ge\pi (1-q_2(n,\Omega))\int_0^{\Bar{s}}\sin^2(s)\,ds. 
\end{equation*}
Using the inequality $\sin(s)\ge \frac{2}{\pi}s$, which is valid for every $s\in [0,\pi/2]$, then 
\begin{equation}\label{eq:estimatebars3}
    \frac{\pi^2}{4}-\frac{\lambda(\Omega)\abs{\Omega}^2}{P(\Omega)^2}\ge\frac{4}{3\pi}(1-q_2(n,\Omega))\Bar{s}^3.
\end{equation}
Recalling \eqref{eq:bars} and Lemma \ref{lemma:M(PM)eP(MP)}, we have that
\begin{equation}\label{eq:estimatebars}
    \Bar{s}^3=\frac{\pi^3}{8}\cdot\frac{\mu\left(\frac{\abs{\Omega}}{P(\Omega)}\right)^3}{\abs{\Omega}^3}\ge\frac{\pi^3}{8\cdot 6^3 n^3}\left(\frac{P(\Omega)R_\Omega}{|\Omega|}-1\right)^3.
\end{equation}
Moreover, by the definition of $q_2(n,\Omega)$ and equation \eqref{convex_estimates}, we get
\begin{equation}\label{eq:estimateintermediate2}
    1-q_2(n,\Omega)= 1-\frac{1}{1+\frac{1}{n}\left(\frac{P(\Omega)R_\Omega}{\abs{\Omega}}-1\right)}= \frac{\frac{1}{n}\left(\frac{P(\Omega)R_\Omega}{\abs{\Omega}}-1\right)}{1+\frac{1}{n}\left(\frac{P(\Omega)R_\Omega}{\abs{\Omega}}-1\right)}\ge \frac{1}{2n-1}\left(\frac{P(\Omega)R_\Omega}{\abs{\Omega}}-1\right).
\end{equation}
Putting  \eqref{eq:estimatebars} and \eqref{eq:estimateintermediate2} in \eqref{eq:estimatebars3}, we have
\begin{equation*}
      \frac{\pi^2}{4}-\frac{\lambda(\Omega)\abs{\Omega}^2}{P(\Omega)^2}\ge\frac{\pi^2}{2^5\cdot 3^4}\cdot\frac{1}{n^3(2n-1)}\left(\frac{P(\Omega)R_\Omega}{\abs{\Omega}}-1\right)^4,
\end{equation*}
which concludes the proof of the lower bound.\\
Regarding the upper bound, the proof is a direct consequence of the Hersch-Protter inequality.
Indeed, using \eqref{eq:Hersch-Protter} and \eqref{convex_estimates}, we get
\begin{equation*}
    \frac{\pi^2}{4} -\frac{\lambda(\Omega)\abs{\Omega}^2}{P^2(\Omega)}  \le \frac{\pi^2}{4}\bigg(1-\frac{\abs{\Omega}^2}{P^2(\Omega)R_\Omega^2}\bigg)=\frac{\pi^2}{4}\bigg(1+\frac{\abs{\Omega}}{P(\Omega)R_\Omega}\bigg)\bigg(1-\frac{\abs{\Omega}}{P(\Omega)R_\Omega}\bigg)\le \frac{\pi^2}{2}\bigg(\frac{P(\Omega)R_\Omega}{\abs{\Omega}}-1\bigg).
\end{equation*}
For the optimality of the exponent of the upper bound see Proposition \ref{prop:sharpnessexponent}.
\end{proof}

Eventually we give the proof of Theorem \ref{thm:quantitative_torsionmakai_PRM}.

\begin{proof}[Proof of Theorem \ref{thm:quantitative_torsionmakai_PRM}]
    Let us start by the lower bound. In this case we use the upper bound \eqref{eq:lowboundpolya}
    \begin{equation*}
        T(\Omega) \le \int_{\Omega}d(x,\partial\Omega)^2\,dx.
    \end{equation*}
    Applying Coarea Formula, integrating by parts and using estimate \eqref{measure_estimate1}, we get
    \begin{equation}\label{eq:makaieq1}
        T(\Omega) \le \int_0^{R_\Omega}t^2P(t)\,dt= 2\int_0^{R_\Omega}t\mu(t)\,dt\le 2\int_0^{R_\Omega}t(R_\Omega-t)P(t)\,dt+ \frac{1}{n-1}\int_0^{R_\Omega}t(R_\Omega-t)^2P'(t)\,dt.
    \end{equation}
    If we integrate by parts the second integral on the right-hand side of \eqref{eq:makaieq1}, we get
    \begin{equation}\label{eq:makaieq2}
    \begin{split}
\int_0^{R_\Omega}t(R_\Omega-t)^2P'(t)\,dt&= t(R_\Omega-t)^2P(t)\bigg|_0^{R_\Omega}-\int_0^{R_\Omega}[(R_\Omega-t)^2-2t(R_\Omega-t)]P(t)\,dt\\
&=2\int_0^{R_\Omega}t(R_\Omega-t)P(t)\,dt-\int_0^{R_\Omega}(R_\Omega-t)^2P(t)\,dt,
    \end{split},
    \end{equation}
    where we notice that one of the two integrals in \eqref{eq:makaieq2} is equal to the one in \eqref{eq:makaieq1}. Therefore
    \begin{equation*}
        \begin{split}\int_0^{R_\Omega}t^2P(t)\,dt&\le \frac{2n}{n-1}\int_0^{R_\Omega}t(R_\Omega-t)P(t)\,dt-\frac{1}{n-1}\int_0^{R_\Omega}(R_\Omega-t)^2P(t)\,dt \\
        &=2\frac{n+1}{n-1}R_\Omega\int_0^{R_\Omega}tP(t)\,dt-\frac{2n+1}{n-1}\int_0^{R_\Omega}t^2P(t)\,dt-\frac{R_\Omega^2\abs{\Omega}}{n-1}.
        \end{split}
    \end{equation*}
    Summing up the same terms, we have
    \begin{equation}\label{eq:makaieq3}
        \int_0^{R_\Omega}t^2P(t)\,dt\le \frac{2(n+1)}{3n}R_\Omega \int_{0}^{R_\Omega}tP(t)\,dt-\frac{R_\Omega^2\abs{\Omega}}{3n}.
    \end{equation}
    We now estimate the integral on the right-hand side of \eqref{eq:makaieq3}. Integrating by parts and using again \eqref{eq:dermu} and \eqref{measure_estimate1}
    \begin{equation*}
    \begin{split}
\int_{0}^{R_\Omega}tP(t)\,dt= \int_{0}^{R_\Omega}\mu(t)\,dt&\le \int_{0}^{R_\Omega}(R_\Omega-t)P(t)\,dt+ \frac{1}{2(n-1)}\int_{0}^{R_\Omega}(R_\Omega-t)^2P'(t)\,dt\\
&=\frac{n}{n-1}\int_{0}^{R_\Omega}(R_\Omega-t)P(t)\,dt-\frac{R_\Omega^2P(\Omega)}{2(n-1)}\\
&=\frac{n}{n-1}R_\Omega\abs{\Omega}-\frac{n}{n-1}\int_{0}^{R_\Omega}tP(t)\,dt-\frac{R_\Omega^2P(\Omega)}{2(n-1)}.
\end{split}
    \end{equation*}
    Therefore
    \begin{equation}\label{eq:makaieq4}
        \int_{0}^{R_\Omega}tP(t)\,dt\le \frac{n}{2n-1}R_\Omega\abs{\Omega}-\frac{R_\Omega^2P(\Omega)}{2(2n-1)}.
    \end{equation}
    Inserting \eqref{eq:makaieq4} into \eqref{eq:makaieq3}, we get
    \begin{equation*}
       \begin{split}
       \int_0^{R_\Omega}t^2P(t)\,dt&\le \frac{2(n+1)}{3(2n-1)}R_\Omega^2\abs{\Omega}+\frac{n+1}{3n(2n-1)} R_\Omega^3P(\Omega)-\frac{R_\Omega^2\abs{\Omega}}{3n}\\
       &=\frac{R_\Omega^2\abs{\Omega}}{3}+\frac{n+1}{3n(2n-1)}\bigg[R_\Omega^2\abs{\Omega}-R_\Omega^3P(\Omega)\bigg]\\
       &=\frac{R_\Omega^2\abs{\Omega}}{3}+\frac{n+1}{3n(2n-1)}R_\Omega^2\abs{\Omega}\beta(\Omega).
\end{split}
\end{equation*}
    Considering \eqref{eq:makaieq1}, dividing by $R^2_\Omega\abs{\Omega}$, we arrive to the conclusion
    \begin{equation*}
        \frac{1}{3}-\frac{T(\Omega)}{R^2_\Omega\abs{\Omega}}\ge \frac{n+1}{3n(2n-1)}\beta(\Omega).
    \end{equation*}
    Let us now prove the upper bound. If we multiply and divide the functional by $P^2(\Omega)/\abs{\Omega}$ and use the lower bound for the P\'olya functional, we get
\begin{equation*}
\begin{split}
\frac{1}{3}-\frac{T(\Omega)}{R_\Omega^2 \abs{\Omega}}&= \frac{1}{3}-\frac{T(\Omega)P^2(\Omega)}{ \abs{\Omega}^2}\cdot\frac{\abs{\Omega}^2}{R_\Omega^2 P^2(\Omega)}\le \frac{1}{3}\bigg(1-\frac{\abs{\Omega}^2}{R_\Omega^2 P^2(\Omega)}\bigg)\\
    &= \frac{1}{3}\bigg(1+\frac{\abs{\Omega}}{R_\Omega P(\Omega)}\bigg)\bigg(1-\frac{\abs{\Omega}}{R_\Omega P(\Omega)}\bigg)\le \frac{2}{3}\bigg(\frac{P(\Omega)R_\Omega}{\abs{\Omega}}-1\bigg).
        \end{split}
\end{equation*}
    \end{proof}

\section{Corollaries and other results}Let us prove Proposition \ref{prop:quantitative_wd}.

 \begin{proof}[Proof of Proposition \ref{prop:quantitative_wd}]

Let us start with the P\'olya torsion functional.
 The lower bound in \eqref{eq:lowboundpolya} leads to 
\begin{equation*}
    T(\Omega) \geq \int_0^{R_\Omega}\frac{\mu^2(t)}{P(t)} \, dt.
\end{equation*}
At this point, let us split the integral above at the value $\overline{t}$ defined for some $\Tilde{c}\in (0,1)$ as
\begin{equation}
    \label{labelcomega}
    \mu(\overline{t})= \Tilde{c}\abs{\Omega}.
\end{equation}
It certainly exists, since the distance function is a $W^{1,\infty}$ function with gradient different from 0 a.e.\,.

Hence, by using \eqref{perimeter_estimate} and \eqref{labelcomega}, we write
\begin{align*}
    T(\Omega)&\geq \int_0^{\overline{t}}\frac{\mu^2(t)}{P(t)} \, dt+\int_{\overline{t}}^{R_\Omega}\frac{\mu^2(t)}{P(t)} \, dt \\
    &\geq  \frac{1}{P^2(\Omega)} \int_0^{\overline{t}}\mu^2(t)(-\mu'(t))\, dt+ \frac{1}{\displaystyle{P(\Omega)\left(P(\Omega) - c_n \frac{\abs{\Omega} - \mu(\overline{t})}{P^{\frac{1}{n-1}}(\Omega)}\right)}}\int_{\overline{t}}^{R_\Omega}\mu^2(t) (-\mu'(t))\, dt\\
    &= \frac{1}{P^2(\Omega)} \cdot\frac{\abs{\Omega}^3-\mu^3(\overline{t}) }{3}+ \frac{1}{\displaystyle{P(\Omega)\left(P(\Omega) - c_n \frac{\abs{\Omega} - \mu(\overline{t})}{P^{\frac{1}{n-1}}(\Omega)}\right)}} \frac{\mu^3(\overline{t}) }{3}\\
    & \geq \frac{1}{P^2(\Omega)} \cdot\frac{\abs{\Omega}^3-\mu^3(\overline{t}) }{3}+ \frac{1}{P^2(\Omega)}\left(1 +c_n\frac{\abs{\Omega} - \mu(\overline{t})}{P^{\frac{n}{n-1}}(\Omega)}\right) \frac{\mu^3(\overline{t}) }{3}\\
    &= \frac{\abs{\Omega}^3}{3P^2(\Omega)}+  c_n\frac{(1-\Tilde{c})\Tilde{c}^3}{P^{2+\frac{n}{n-1}}(\Omega)}\cdot\frac{\abs{\Omega}^4 }{3}.
\end{align*}
Now we choose $\Tilde{c}$ in order to maximize $(1-\Tilde{c})\Tilde{c}^3$. So we find the maximum in $(0,1)$ of the function $f(x)=(1-x)x^3$, which gives
\begin{equation*}
    \Tilde{c}= \frac{3}{4}.
\end{equation*}
Hence, we have
\begin{align}\label{eq:estimate1}
   \frac{ T(\Omega)P^2(\Omega)}{\abs{\Omega}^3}&\geq \frac{1}{3}+ \frac{27 c_n}{256} \cdot\frac{\abs{\Omega}}{P(\Omega)}\cdot\frac{1 }{P^{\frac{1}{n-1}}(\Omega)} \geq \frac{1}{3}+ \frac{27 c_n}{256n}\cdot\frac{R_\Omega}{P^{\frac{1}{n-1}}(\Omega)}.
\end{align}

\noindent Combining \eqref{eq:estimate1} with \eqref{eq:lowboundinradius} and \eqref{eq:perdiam}, we get the assertion with

\begin{equation*}
    \displaystyle C_1(n)=\begin{cases}
     \displaystyle{\frac{27}{256n^{\frac{3n-2}{2(n-1)}}}\sqrt{\frac{n+2}{2n+2}}} & n \,\, \text{even}\\ \\
      \displaystyle{\frac{27}{256n^{\frac{3n-2}{2(n-1)}}}\sqrt{\frac{n+2}{4n}}} & n \,\, \text{odd}.
    \end{cases}
\end{equation*}
\hspace{5mm}

For what it concerns the P\'olya eigenvalue functional, we start from \eqref{polyalambda}.
At this point, let us split the integral above at the value $\overline{t}$ defined as
$$
\mu(\overline{t})= \frac{\abs{\Omega}}{2}.
$$
It certainly exists, since the distance function is a $W^{1,\infty}$ function with gradient different from 0 a.e.. Hence we write
\begin{equation*}
    \lambda(\Omega) \leq \dfrac{\displaystyle{\int_0^{\overline{t}} (g'(t))^2 P(t) \, dt+\int_{\overline{t}}^{R_\Omega} (g'(t))^2 P(t) \, dt}}{\displaystyle{\int_0^{R_\Omega} g^2(t) P(t) \, dt}}.
\end{equation*}
Now, performing the same change of variable proposed by P\'olya
\begin{equation*}
    s= \frac{\pi\mu(t)}{2|\Omega|},
\end{equation*}
we get
\begin{equation}\label{eq:lambdachangevar}
    \lambda(\Omega) \le \frac{\pi^2}{4\abs{\Omega}^2}\frac{\displaystyle \int_{\frac{\pi}{4}}^{\frac{\pi}{2}}h'(s)^2P(t)^2\,ds +\int_0^{\frac{\pi}{4}}h'(s)^2P(t)^2\,ds}{\displaystyle\int_0^{\frac{\pi}{2}}h(s)^2\,ds},
\end{equation}
where $h(s)=g(t)$ and $h(\frac{\pi}{2})=0$. Since for each $s \in \left[0, \frac{\pi}{4}\right]$ we have $t\in [\overline{t}, R]$, in such interval we have by \eqref{perimeter_estimate}
$$
 P(t)\leq P(\Omega) - c_n \frac{\abs{\Omega} - \mu(t)}{P^{\frac{1}{n-1}}(\Omega)} \leq P(\Omega) - c_n \frac{\abs{\Omega} - \mu(\overline{t})}{P^{\frac{1}{n-1}}(\Omega)}=P(\Omega) - \frac{c_n}{2}\cdot\frac{\abs{\Omega}}{P^{\frac{1}{n-1}}(\Omega)}.
$$
Hence, \eqref{eq:lambdachangevar} gives
\begin{align*}\label{eq:bho}
    \lambda(\Omega) &\le \frac{\pi^2}{4\abs{\Omega}^2}\cdot\frac{\displaystyle P(\Omega)^2\int_{\frac{\pi}{4}}^{\frac{\pi}{2}}h'(s)^2\,ds +P(\Omega) \left(P(\Omega) - \frac{c_n}{2}\cdot\frac{\abs{\Omega}}{P^{\frac{1}{n-1}}(\Omega)}\right)\int_0^{\frac{\pi}{4}}h'(s)^2\,ds}{\displaystyle\int_0^{\frac{\pi}{2}}h(s)^2\,ds},\\
     &=\frac{\pi^2P(\Omega)^2}{4\abs{\Omega}^2}  \left(\frac{\displaystyle \int_{0}^{\frac{\pi}{2}}h'(s)^2\,ds }{\displaystyle\int_0^{\frac{\pi}{2}}h(s)^2\,ds}-\frac{c_n}{2}\frac{\abs{\Omega}}{P(\Omega)}\cdot\frac{1}{P^{\frac{1}{n-1}}(\Omega)}\cdot\frac{ \displaystyle\int_0^{\frac{\pi}{4}}h'(s)^2\,ds}{\displaystyle\int_0^{\frac{\pi}{2}}h(s)^2\,ds}\right).
\end{align*}
Choosing $h(t)= cos(t)$, we get 
\begin{equation}\label{eq:const}
    \frac{\displaystyle\int_0^{\frac{\pi}{2}}h'(s)^2\,ds}{\displaystyle\int_0^{\frac{\pi}{2}}h(s)^2\,ds}=1\qquad\frac{\displaystyle\int_0^{\frac{\pi}{4}}h'(s)^2\,ds}{\displaystyle\int_0^{\frac{\pi}{2}}h(s)^2\,ds}=\frac{\pi -2}{8}\cdot\frac{4}{\pi}=\frac{\pi -2}{2\pi}.
\end{equation}
Then equation \eqref{eq:const} and $P(\Omega)R \leq \abs{\Omega} n$, gives 
\begin{equation}
    \label{quasidef}
    \frac{\lambda(\Omega)\abs{\Omega}^2}{P(\Omega)^2}\leq\frac{\pi^2}{4} -  \frac{\pi^2 c_n}{8n} \left(\frac{\pi-2}{2\pi}\right) \frac{R_\Omega}{P^{\frac{1}{n-1}}(\Omega)}.
\end{equation}
Again, combining \eqref{quasidef} with \eqref{eq:lowboundinradius} and \eqref{eq:perdiam}, we get the assertion with 
\begin{equation*}
    \displaystyle C_2(n)=\begin{cases}
     \displaystyle{\frac{\pi^2(n-1)}{8n^{\frac{n}{n-1}}}\cdot\frac{(\pi-2)}{2\pi}\sqrt{\frac{n+2}{n(2n+2)}}} & n \,\, \text{even}\\ \\
       \displaystyle{\frac{\pi^2(n-1)}{8n^{\frac{n}{n-1}}}\cdot\frac{(\pi-2)}{2\pi}\cdot\frac{\sqrt{n+2}}{2n}} & n \,\, \text{odd}.
    \end{cases}
\end{equation*}
About the sharpness of the exponent in both the inequalities \eqref{quantitative_width} and \eqref{quantitative_width2}, see Proposition \ref{prop:sharpnessexponent}, while for the counterexample see Remark \ref{rem:counterexamples}.
\end{proof}

Finally we give the proof of Corollary \ref{cor:quantitative_lr}.

\begin{proof}[Proof of Corollary \ref{cor:quantitative_lr}]
If we multiply and divide the functional by $\abs{\Omega}^2/P(\Omega)^2$ and use the lower bound for the P\'olya functional, we get
\begin{equation*}
\begin{split}
\lambda(\Omega)R_\Omega^2 - \frac{\pi^2}{4}&= \frac{\lambda(\Omega)\abs{\Omega}^2}{ P(\Omega)^2} \cdot\frac{P(\Omega)^2R_\Omega^2 }{\abs{\Omega}^2} - \frac{\pi^2}{4}\le \frac{\pi^2}{4}\bigg(\frac{P(\Omega)^2R_\Omega^2 }{\abs{\Omega}^2}-1\bigg)\\
    &= \frac{\pi^2}{4}\bigg(\frac{P(\Omega)R_\Omega }{\abs{\Omega}}+1\bigg)\bigg(\frac{P(\Omega)R_\Omega }{\abs{\Omega}}-1\bigg)\le \frac{\pi^2(n+1)}{4}\bigg(\frac{P(\Omega)R_\Omega}{\abs{\Omega}}-1\bigg).
        \end{split}
\end{equation*}
\end{proof}


\section{Sharpness and counterexamples}\label{Sec:sharpnessandcounterexamples}
In this Section, we prove the sharpness of the exponents of the lower bounds in \eqref{eq:PRM-WD},   
\eqref{quantitative_width} and  \eqref{quantitative_width2},  and the upper bounds in \eqref{eq:TPM-PRM} and \eqref{quantitative_HMP-PRM}.
To this aim, we will exhibit a suitable sequence of convex sets for which the deficit (i.e. the absolute value of the difference between the functional and the optimal constant)  divided by the remainder term converges to a positive constant.  


\begin{prop}\label{prop:sharpnessexponent}   Let $\Omega\in \mathcal{K}_n$. Then the exponents of the lower bounds in \eqref{eq:PRM-WD}, 
\eqref{quantitative_width} and  \eqref{quantitative_width2},  and of the upper bounds in \eqref{eq:TPM-PRM} and  \eqref{quantitative_HMP-PRM}
    are sharp.  
\end{prop}

\begin{proof}
Let us  consider the following family of thinning parallelepipeds 
$$
\Omega_a = [0,1]^{n-1} \times [0,a], \text{ with } a \to 0.
$$
By simple computations we can explicit the behavior of the quantities involved in our functionals  in terms of $a$ (see Table \ref{tab:tab1}).
\begin{table}[!ht]
\centering
\begin{tabular}{|c||c|c|c|c|c|c|c|}
    \hline

    & & & & & & &\\[-0.5ex]
   Set
    & $|\Omega|$ &  $P(\Omega)$  &  $R_\Omega$ & $w(\Omega)$ & $\diam(\Omega)$ & $T(\Omega)$ & $\lambda(\Omega)$ 
    \tabularnewline[3ex]
    \hline\hline

    & & & & & & &\\[-0.5ex]
    $\Omega_a$  & $a$ & $2 + \mathcal{H}^{n-2}\left(\partial [0,1]^{n-1}\right) a$ & $\frac{a}{2}$ & $a$ & $\sqrt{n-1+a^2}$ & $\simeq \frac{a^3}{12}$& $\pi^2 \left(n-1+\frac{1}{a^2}\right)$ 
    \tabularnewline[3ex]
    \hline

\end{tabular}
\caption{Values of the functional on $ \Omega_a$.}\label{tab:tab1}
\end{table}
Let us start with \eqref{eq:PRM-WD}, \eqref{quantitative_width} and  \eqref{quantitative_width2}, by considering $\Omega_a$ by Table \ref{tab:tab1} we get that there exist positive constants $Q_1$ and $Q_2$ such that
\begin{align*}
\frac{P(\Omega_a)R_{\Omega_a}}{\abs{\Omega_a}}  -1 &\leq Q_1 \,a  \leq Q_2 \frac{w_{\Omega_a}}{\diam(\Omega_a)}; \\
\frac{T(\Omega_a)P^2(\Omega_a)}{\abs{\Omega_a}^3}- \frac{1}{3} &\leq Q_1\, a \leq Q_2 \frac{w_{\Omega_a}}{\diam(\Omega_a)};\\
\frac{\pi^2}{4}- \frac{\lambda(\Omega_a)\abs{\Omega_a}^2}{ P^2(\Omega_a)}&\leq Q_1\, a \leq Q_2  \frac{w_{\Omega_a}}{\diam(\Omega_a)}.
\end{align*}

 We now prove it for the upper bounds in \eqref{eq:TPM-PRM} and  \eqref{quantitative_HMP-PRM}. 
 From Table \ref{tab:tab1} there exists positive constants $Q_3$ and $Q_4$ such that
\begin{align*}
\frac{T(\Omega_a)P^2(\Omega_a)}{\abs{\Omega_a}^3}- \frac{1}{3} &\geq Q_3\, a \geq Q_4 \left(\frac{P(\Omega_a)R_{\Omega_a}}{\abs{\Omega_a}}  -1\right);\\
\frac{\pi^2}{4}- \frac{\lambda(\Omega_a)\abs{\Omega_a}^2}{ P^2(\Omega_a)}
&\geq Q_3\, a \geq Q_4 \left(\frac{P(\Omega_a)R_{\Omega_a}}{\abs{\Omega_a}}  -1\right). \\
\end{align*}
\end{proof}
\begin{oss}\label{rem:counterexamples}
    In what follows we give counterexamples of the non-validity of the reverse inequalities of Propositions 
\ref{prop:comparisonasymmetries}, \ref{prop:quantitative_wd} and Corollary  \ref{cor:quantitative_lr}. 

To do that we will consider the  the family of collapsing pyramids $C_\alpha$. Let us consider the $(n-1)$-dimensional square $Q_{n-1}=(-1/2,1/2)^{n-1}$ and a point $V\in \mathbb R^n$, such that $V=(0,...,\alpha)$, with $\alpha>0$. Then the family of collapsing pyramids is given by
\begin{equation*}
    C_\alpha=\text{convexhull}(Q_{n-1},V).
\end{equation*}

In particular, we resume the asymptotics of some geometric quantities for the family of pyramids as $\alpha$ go to 0:
\begin{equation}
    \label{asymp}
    \lim_{\alpha \to 0}\frac{\abs{C_\alpha}}{\alpha}=\frac 1 n, \quad \lim_{\alpha \to 0}P({C_\alpha})=2, \quad  \lim_{\alpha \to 0}\frac{R_{C_\alpha}}{\alpha}=\frac 1 2, \quad  \lim_{\alpha \to 0}\frac{w({C_\alpha})}{\alpha}=1, \quad \diam(C_\alpha)= \sqrt{2}.
\end{equation}

\begin{enumerate}
    \item  The asymptotics in \eqref{asymp} easily gives

\begin{equation*}
         \lim_{\alpha\to 0}\frac{w(C_\alpha)}{\diam(C_\alpha)}=0,
    \end{equation*}
while
\begin{equation}\label{eq:betapyramid}
    \lim_{\alpha\to 0}\frac{P(C_\alpha)R_{C_\alpha}}{\abs{C_\alpha}}-1 = n-1.
\end{equation}
This proves that a reverse inequality in Proposition \ref{prop:comparisonasymmetries} is not allowed.

 \item About Proposition \ref{prop:quantitative_wd} is not necessary to exhibit an explicit counterexamples, since it follows from the counterexample given in Proposition \ref{prop:comparisonasymmetries}. Indeed if there existed a set $\tilde{\Omega}$ for which 
 \begin{equation*}
     \tilde{c}_1(n)\alpha(\tilde{\Omega})\ge \frac{T(\tilde\Omega)P^2(\tilde\Omega)}{|\tilde\Omega|^3}-\frac{1}{3}, 
 \end{equation*}
  for some positive dimensional constant $\tilde c(n)$, then by Theorem  \ref{thm:quantitative_PRM} we would have
  \begin{equation*}
       \alpha(\tilde\Omega)\ge \tilde k(n)\beta(\tilde\Omega)^3,
  \end{equation*}
  for some positive dimensional constant $\tilde k(n)$. But this is not possible since it would go against Proposition \ref{prop:comparisonasymmetries}. For the same reason inequalities as
  \begin{equation*}
      \tilde c_2(n)\alpha(\tilde \Omega) \ge \frac{\pi^2}{4}-\frac{\lambda(\tilde\Omega)|\tilde\Omega|^2}{P^2(\tilde\Omega)}\qquad \text{or}\qquad \tilde c_3(n)\alpha(\tilde \Omega) \ge\frac{1}{3}-\frac{T(\tilde\Omega)}{R^2_{\tilde\Omega}|\tilde\Omega|}
  \end{equation*}
cannot hold.
 \item Taking into account \cite{BFNT2020}, we have
 $$\lim_{\alpha \to 0}\lambda(C_\alpha) R^2_{C_\alpha} =\frac \pi 4,$$
 since the projection on the minimizing supporting hyperplane is $Q_{n-1}$, its eigenvalue is fixed.

On the contrary, equation \eqref{eq:betapyramid} gives $\beta(C_\alpha) \to 0$ as $\alpha$ goes to 0.
This proves that a reverse inequality in Corollary \ref{cor:quantitative_lr} is not allowed.

\end{enumerate}

\end{oss}

\section{Conclusion and open problems}

To conclude, Theorems~\ref{thm:quantitative_PRM}, \ref{thm:quantitative_lambdapr}, and~\ref{thm:quantitative_torsionmakai_PRM} fully characterize the optimizing sequences of the P\'olya torsion functional, the P\'olya eigenvalue functional, and the Makai functional in terms of the optimizing sequences of the purely geometric functional $\beta(\Omega)$. 
We expect that a more precise description of the profiles of these optimizing sequences could be obtained starting from the explicit expression of $\beta$, but such a characterization is still missing.

We conclude by listing some open problems related to the present work.
\begin{itemize}
    \item As mentioned above, a first natural direction of research concerns the minimizing sequences of $\beta(\Omega)$. It seems reasonable to conjecture that such sequences resemble collapsing cylindroids, losing at least one dimension.
    
    \item As shown in \eqref{polyatorsion}, \eqref{polyaeigenvalue}, and \eqref{hershtype}, the functionals under consideration are bounded both from above and from below. In this work, we focused only on one of the two inequalities, namely the one for which the minimizing sequences behave like collapsing cylindroids.
    
    There are, however, other inequalities of interest, for instance
    \[
    \frac{\pi^2}{4n^2}
    < \frac{\lambda(\Omega)\,|\Omega|^2}{P(\Omega)^2}.
    \]
    In this case, the minimizing sequences are given by collapsing pyramids $\{C_\alpha \}$, as shown by Brasco in~\cite{brasco2018principal}. Since it is easy to check that $\beta(C_\alpha) $ goes to $ 3$ as the pyramids collapse, the functional $\beta$ cannot be used to characterize the minimizing sequences of this inequality. Since on sequences of collapsing pyramids $\{C_\alpha\}$ the functional $\frac{P(\Omega)R_\Omega}{\abs{\Omega}}$ achieves the value $n$, a possible direction in proving a quantitative result could be the following
    \begin{equation*}
        \frac{\pi^2}{4n^2}-\frac{\lambda(\Omega)\abs{\Omega}^2}{P(\Omega)^2}\ge \overline{c}(n) \bigg(n-\frac{P(\Omega)R_\Omega}{\abs{\Omega}}\bigg)^\gamma,
    \end{equation*}
    for some dimensional positive constant $\overline{c}$ and for some positive exponent $\gamma>0$.
\end{itemize}

\section*{Acknowledgements}
We thank the referees for their careful reading and constructive comments.

The first two authors were supported by the Project MUR PRIN-PNRR 2022:  "Linear and Nonlinear PDE’S: New directions and Applications", P2022YFA.\\
The third author was supported by the Project PRIN 2020 Prot. 2024NT8W4 "Nonlinear evolution PDEs, fluid dynamics and transport equations: theoretical foundations and applications", CUP C95F21010250001.\\
This work has been partially supported by GNAMPA group of INdAM.

\section*{Conflicts of interest and data availability statement}
The authors declare that there is no conflict of interest. Data sharing not applicable to this article as no datasets were generated or analyzed during the current study.

\bibliographystyle{plain}
\bibliography{biblio}

\Addresses 
\end{document}